\numberwithin{equation}{section}
\theoremstyle{plain}
\newtheorem{theorem}{Theorem}[section]
\newtheorem{lemma}[theorem]{Lemma}
\newtheorem{remark}[theorem]{Remark}
\numberwithin{equation}{section}
\def\a{\alpha}
\def\L{\Lambda}
\def\n{\,\vert\,}
\def\tr{{\rm tr\/}}
\def\rank{{\rm rank\/}}
\def\ind{{\rm ind\/}}
\def\Stab{{\rm Stab\/}}
\def\R{\mathbb{R} }
\def\N{\mathbb{N}}
\def\fg{\mathfrak{g}}
\begin{document}
	
	\title{A variation of the reduced  Vafa-Witten equations on 4-manifolds}

	
	\author{Ren Guan}
	\address{School of Mathematics and Statistics, 
		Jiangsu Normal University, Xuzhou 221100, China}
	\email{guanren@jsnu.edu.cn}

	\begin{abstract}
		In this paper we consider a variation of the Vafa-Witten equations on compact, oriented and smooth 4-manifolds, and construct a set of perturbation terms to establish the transversality of that equations. The new perturbed equations provide us a priori estimates of the solutions, while the original reduced Vafa-Witten equations does not. By applying the a priori estimates we show that the singular of the solutions can be removed, and then construct the Ulhenbeck closure of the moduli spaces.

	\end{abstract}
	
	\subjclass{58D27, 53C07, 81T13}
	
	\keywords{Vafa-Witten moduli spaces,  transversality, perturbations, 4-manifolds}
	
	\maketitle
	
	\tableofcontents
	
	\section{Introduction}
	
	Let $(X,g)$ be a compact, oriented and smooth Riemannian 4-manifold, $k\geq 3$ be an integer, $G$ a compact Lie group and denote by $\mathfrak{g}$ the Lie algebra of $G$. Let $P\to X$ a principle $G$-bundle on $X$, $\mathfrak{g}_P:=P\times_{\text{ad}}\mathfrak{g}$ the associated bundle of $P$, $\mathcal{A}_k (P)$ the space of $L_k^2$ connections on $P$, $\mathcal{G}_{k+1}(P)$ the group of $L^2_{k+1}$ gauge transformation of $P$. For a triplet $(A,B,C)\in\mathcal{A}_k (P)\times L_k^2(X,\mathfrak{su}(2)_P\otimes\Lambda^{2,+})\times L_k^2(X,\mathfrak{su}(2)_P)$, the equation
	\begin{equation}\left\{
		\begin{aligned}
			&d_A^*B +d_A C=0,\\&F_A^++\frac{1}{8}[B\centerdot B]+\frac{1}{2}[B,C]=0  \\
		\end{aligned}\right. \label{aa}
	\end{equation}
	is called \emph{Vafa-Witten equation}, which is invited by Vafa and Witten when studying an $N=4$ topologically twisted supersymmetric Yang-Mills theory on 4-manifolds\cite{VW}. Equation \eqref{aa} is constructed to compute the Euler characteristic of instanton moduli space, and the construction also makes \eqref{aa} satisfy some "vanishing theorem", i.e., for some special 4-manifolds $X$, \eqref{aa} has the same solution as the ASD equation $F_A^+=0$\cite[Theorem 2.1.3]{Ma}.
	
	On closed 4-manifolds, Vafa-Witten equation is equivalent to
	\begin{equation}
		\left\{
		\begin{aligned}
			&d_AC=d_A^*B=0\\&F_A^++\frac{1}{8}[B\centerdot B]=[B,C]=0.\\
		\end{aligned}\right.
		\label{ab}
	\end{equation}
	When $G=SU(2)$ or $SO(3)$ and the connection $A$ is irreducible, $d_AC=0$ implies $C=0$ (cf.  \cite[\S 2.1]{Ma}), and then the equation \eqref{ab} can be further reduced  to
	\begin{equation}\left\{
		\begin{aligned}
			&d_A^*B=0\\&F_A^++\frac{1}{8}[B\centerdot B]=0.\\
		\end{aligned}\right.
		\label{ac}
	\end{equation}
	We call equation \eqref{ac}  the \emph{reduced Vafa-Witten equation}.
	
	Unlike $d_A^*\oplus d_A:L_k^2(X,\mathfrak{su}(2)_P\otimes\Lambda^{2,+})\times L_k^2(X,\mathfrak{su}(2)_P)\to L_k^2(X,\mathfrak{su}(2)_P\otimes\Lambda^{1})$, the operator $d_A^*:L_k^2(X,\mathfrak{su}(2)_P\otimes\Lambda^{2,+})\to L_k^2(X,\mathfrak{su}(2)_P\otimes\Lambda^{1})$ is not elliptic, and  also not a Fredholm operator, the cancellation of $C$ in \eqref{aa} destroys the finiteness of the expected dimension of the moduli space of solutions to \eqref{aa}. To settle this down,  we replace $d_A^*B=0$ by the following one:
	$$d_A^+d_A^*B=0.$$
	
	If $d_A^+d_A^*B=0$, take inner product with $B$ we have
	$$0=\langle d_A^+d_A^*B,B\rangle_{L^2}=\langle d_A^*B,d_A^*B\rangle_{L^2}=\vert \vert d_A^*B\vert \vert _{L^2}^2,$$
	so we must have $d_A^*B=0$, which means
	\begin{equation}\left\{
		\begin{aligned}
			&d_A^+d_A^*B=0\\&F_A^++\frac{1}{8}[B\centerdot B]=0\\
		\end{aligned}\right.
		\label{ad}
	\end{equation}
	and \eqref{ac} have the same solutions, and  $d_A^+d_A^*:\Omega^{2,+}(X)\to\Omega^{2,+}(X)$ is self-adjoint, hence a Fredholm operator. So once the transversality is established, the expected dimension of the moduli space of \eqref{ad} is finite (See \cite{D1,D2} for the discussion of ASD equations, \cite{Fe,FL} for $PU(2)$-monopole equations and \cite{Mor} for Seiberg-Witten equations). We call \eqref{ad}  the \emph{variated reduced Vafa-Witten equation}, or \emph{VRVW equation} for short, which is the main object studied in this paper.
	
	Recall that both of Donaldson's polynomial invariants and Seiberg-Witten invariants can distinguish the different differential structures on homeomorphic 4-manifolds. For example, one can use both invariants to show that $K3\#\overline{\mathbb{CP}^2}$ and $\#3\mathbb{CP}^2\#20\overline{\mathbb{CP}^2}$ are homeomorphic but not diffeomorphic (see \cite[Theorem A]{D1}, \cite[\S 2, \S 4]{W}). The two invariants are both defined via the moduli space of solutions to certain equations derived from gauge theory. Donaldson polynomial invariants are defined via the $SU(2)$ anti-self-dual (ASD) Yang-Mills equation on 4-manifolds with non-abelian structure group \cite{D1,D2},  and Seiberg-Witten invariants rely on the $U(1)$ monopole equations \cite{Mor,W}.
	
	Before defining the invariants, Donaldson, Seiberg and Witten construct perturbations to the corresponding equations first (see \cite{D1,Mor} for details), the main reason for doing this is that the moduli spaces of solutions to the original equations don't form smooth manifolds due to the lack of transversality. After constructing proper perturbations, the moduli spaces of solutions to the perturbed equations form smooth manifolds and after a lot of sophisticated analysis, the invariants are constructed and can be applied to distinguish the differential structures of 4-manifolds who are topological homeomorphism. 
	
	Besides the transversalty, the compactness of the moduli spaces is also of great importance. For Seiberg-Witten equations, there is \emph{a priori} estimates for the solutions, which means the bounds don't rely on the Spin$^c$ structures but only on the geometry of the base manifolds $X$\cite[Corollary 5.2.2]{Mor}. The reason why we can deduce this estimates is that the quadratic form $q(\psi)$ of Seiberg-Witten equations is positive definite, i.e., there is a positive number $C>0$ such that $q(\psi)\geq C\n\psi\n^2$. But for Vafa-Witten equations, the method doesn't work, because the quadratic form $[B\centerdot B]$ is not positive definite, so there's no longer \emph{a priori} estimates for Vafa-Witten equations.
	
    Previously,  to derive the transversality result, Tanaka constructs a perturbation of Vafa-Witten equation on closed symplectic 4-manifolds which contains five parameters, then show that for generic perturbations, the moduli spaces are zero-dimensional manifolds\cite{Tan}. And recently, Bo Dai and I show that the transversality is satisfied at the full rank  part of the Vafa-Witten moduli spaces, which implies that part of moduli spaces are smooth manifolds of dimension zero. In this paper, we partially solve the problem of \emph{a priori} estimates of Vafa-Witten equations on a general compact, oriented and smooth Riemannian 4-manifolds by constructing a perturbation to the equation \eqref{ad}. After constructing the perturbation, we first establish the transversality of the perturbed VRVW equations, then we show that the dimensions of the non-asd part of moduli spaces of the perturbed VRVW equations are finite for a generic choice of the perturbation terms. Also the perturbation will provide us a priori estimates of $B$ and $d_AB$, which is new for Vafa-Witten equations. And then on the basis of the estimates, we construct the Ulhenbeck closure of the moduli spaces of the perturbed VRVW equations.
	
	Moreover, when $X=B^4$, the unit ball of $\mathbb{R}^4$ and $P= X\times G$, the product bundle, then we show that the moduli space $\mathcal{M}_{PVRVW,\kappa(P)}^*(t,\tau)$ is compact for $t$ sufficiently large. Our results are summerized as the following theorems.
	\begin{theorem}\label{m1}Let $(X,g)$ be a compact, oriented and smooth Riemannian 4-manifold, $G=SU(2)$ or $SO(3)$, $P\to X$ a principle $G$-bundle, $\mathcal{T}^r:=\R^+\times C^r(GL(\Lambda^{2,+}))$ where $r>k$, $\kappa(P)$ the characteristic number of $P$. If $\kappa(P)\geq 3/8(1-b_1(X)+b_2^+(X))$, then there is a first-category subset $\mathcal{T}^r_{fc}\subset\mathcal{T}^r$ such that for all $(t,\tau)$ in $\mathcal{T}^r-\mathcal{T}^r_{fc}$, after module the group of gauge transformations $\mathcal{G}_{k+1}(P)$, the solutions $(A,B)\in\mathcal{A}_k (P)\times L_k^2(X,\mathfrak{su}(2)_P\otimes\Lambda^{2,+})$ with $B\not\equiv 0$ of the equation
	\begin{equation}\label{yz}\left\{
			\begin{aligned}
				&d_A^+d_A^*B+t\langle B,B\rangle^2B+\tau[[B\centerdot B]\centerdot[B\centerdot B]]=0\\
				&F_A^++\frac{1}{8}[B\centerdot B]=0\\
	\end{aligned}\right.\end{equation}
	constitute a smooth manifold $\mathcal{M}_{PVRVW,\kappa(P)}^*(t,\tau)$ of dimension $8\kappa(P)-3(1-b_1(X)+b_2^+(X))$. Moreover, we have the following a priori boundness result: there are constants $C_{t,\tau,X}$ and $K_{t,\tau,X}$ depend only on $t,\tau$ and $X$ such that for any solution $(A,B)$ of \eqref{yz}, we have $\vert\vert B\vert\vert_{L^{\infty}} \leq C_{t,\tau,X}$  and $\vert\vert d_AB\vert\vert_{L^2}\leq K_{t,\tau,X}$. For fixed $\tau$, we also have $\lim_{t\to\infty}K_{t,\tau,X}=\lim_{t\to\infty}C_{t,\tau,X}=0$.
    \end{theorem}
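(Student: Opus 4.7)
The plan is to prove Theorem~\ref{m1} in three logically separate stages: transversality by a parametrized Sard--Smale argument, the dimension count by a Fredholm index computation, and the \emph{a priori} estimates by testing the equation against $B$ and upgrading via elliptic bootstrap.

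\emph{Transversality.} First I would introduce the universal zero locus
\[
\widetilde{\mathcal{Z}}:=\bigl\{(A,B,t,\tau)\in\mathcal{A}_k(P)\times L_k^2(X,\mathfrak{su}(2)_P\otimes\Lambda^{2,+})\times\mathcal{T}^r : (A,B)\text{ solves }\eqref{yz},\ B\not\equiv 0\bigr\},
\]
together with the map $\Phi$ given by the left-hand sides of \eqref{yz}. After forming the quotient by $\mathcal{G}_{k+1}(P)$ in the standard way (slice theorem), the key point is to verify that the linearisation of $\Phi$ in the full variable $(a,b,t',\tau')$ is surjective at every zero of $\Phi$ with $B\not\equiv 0$. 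Varying $t'$ produces the cokernel-test direction $\langle B,B\rangle^2 B$, and varying $\tau'\in C^r(\End(\Lambda^{2,+}))$ produces any direction of the form $\tau'[[B\centerdot B]\centerdot[B\centerdot B]]$. I would pair these variations against a putative cokernel element $(\phi,\psi)$ and use the freedom in $\tau'$ to deduce pointwise orthogonality on $\{B\ne 0\}$, then extend this orthogonality to all of $X$ by an Aronszajn-type unique continuation argument applied to the elliptic adjoint equation satisfied by $(\phi,\psi)$. Once surjectivity is in place, Sard--Smale delivers the desired first-category subset $\mathcal{T}^r_{fc}$.

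\emph{Dimension count.} Next I would write down the deformation operator. With the Coulomb gauge condition $d_A^*a=0$ adjoined, the top-order symbol of the combined linearisation reads
\[
(a,b)\longmapsto\bigl(d_A^*a,\ d_A^+a+\tfrac14[B\centerdot b],\ d_A^+d_A^*b+\text{l.o.t.}\bigr).
\]
Because $d_A^+d_A^*$ on $\Omega^{2,+}$ is self-adjoint and Fredholm of index zero, the total Fredholm index reduces to that of the classical ASD deformation complex, namely $8\kappa(P)-3(1-b_1(X)+b_2^+(X))$. The hypothesis $\kappa(P)\geq\tfrac38(1-b_1(X)+b_2^+(X))$ makes this expected dimension non-negative.

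\emph{A priori estimates.} For the bounds on $B$ I would pair the first equation of \eqref{yz} with $B$ in $L^2$ to obtain
\[
\|d_A^*B\|_{L^2}^2+t\int_X|B|^6\,dV_g=-\int_X\bigl\langle\tau[[B\centerdot B]\centerdot[B\centerdot B]],B\bigr\rangle\,dV_g,
\]
whose right-hand side is pointwise controlled by $C(\tau)|B|^5$. H\"older's inequality then yields
\[
\|d_A^*B\|_{L^2}^2+t\|B\|_{L^6}^6\leq C(\tau,X)\|B\|_{L^6}^5,
\]
from which $\|B\|_{L^6}\leq C(\tau,X)\,t^{-1}$ and $\|d_A^*B\|_{L^2}\leq C(\tau,X)\,t^{-5/2}$, both vanishing as $t\to\infty$. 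To upgrade to $L^\infty$, I would run a Moser iteration on the semi-linear elliptic equation $d_A^+d_A^*B+t|B|^4B+\tau[[B\centerdot B]\centerdot[B\centerdot B]]=0$, exploiting the sign of $t|B|^4B$ to absorb nonlinear terms; the second equation of \eqref{yz} controls $F_A^+$ pointwise by $|B|^2$, which feeds into a Weitzenb\"ock identity of the form $2d_A^+d_A^*B=\nabla_A^*\nabla_A B+\mathcal{R}(g,F_A^+)B$. That same Weitzenb\"ock identity then converts the $L^2$ bound on $d_A^*B$ into the required $L^2$ bound on $\nabla_A B$, and hence on $d_A B$.

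The main obstacle I anticipate is the surjectivity of the linearisation where $B$ nearly vanishes: since both perturbation terms scale like powers of $B$, the effective perturbation cone collapses on the zero set of $B$, and the cokernel analysis must rely on a unique continuation principle for the adjoint operator. Handling this delicate step rigorously, in the spirit of the $PU(2)$-monopole transversality arguments of \cite{FL}, is where the technical heart of the proof lies.
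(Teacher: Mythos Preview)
Your three-part architecture matches the paper's, and the dimension count is essentially identical. There are, however, two genuine gaps in the transversality step.

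First, you plan to kill the cokernel component $\phi$ by varying $\tau'$ and obtaining pointwise orthogonality ``on $\{B\ne 0\}$''. But the direction $\tau'[[B\centerdot B]\centerdot[B\centerdot B]]$ spans the full fibre $\mathfrak{su}(2)_P\otimes\Lambda^{2,+}$ only at points where $\rank B=3$; wherever $B\ne 0$ but $\rank B\le 2$ one has $[[B\centerdot B]\centerdot[B\centerdot B]]=0$ identically, and the $\tau'$-variation yields nothing. The paper closes this with a preliminary lemma: if $[A,B]$ solves \eqref{yz} with $B\not\equiv 0$, then necessarily $\rank B=3$ on an open set. The proof uses the positivity of $t$: if $\rank B\le 2$ globally then the $\tau$-term drops out, and pairing the first equation with $B$ gives $\|d_A^*B\|_{L^2}^2+t\|B\|_{L^6}^6=0$, forcing $B\equiv 0$. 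You rediscover exactly this integral identity in your a priori estimate section, but you do not invoke it where it is structurally needed---to produce an open set on which the $\tau'$-variation is effective.

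Second, and more seriously, you give no mechanism for killing $\psi$. The parameters $(t,\tau)$ enter only the first equation of \eqref{yz}, so varying them says nothing about the second cokernel component. The paper instead varies $b$: the linearised second equation contains $\tfrac14[b\centerdot B]$, and pairing with $\psi$ gives $\langle b,[B\centerdot\psi]\rangle_{L^2}=0$ for all $b$, hence $[B\centerdot\psi]=0$ pointwise. A separate linear-algebra lemma then shows that when $\rank B=3$ the map $\psi\mapsto[B\centerdot\psi]$ on $\mathfrak{su}(2)\otimes\Lambda^{2,+}$ is an isomorphism, forcing $\psi=0$ on the open set above; only then does unique continuation finish the argument. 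Your final paragraph correctly flags the degeneration of the perturbation cone, but misidentifies it as a zero-set-of-$B$ issue rather than a rank issue, and omits the $\psi$-component entirely.

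On the a priori estimates your route (an $L^6$ bound from the integral identity, then Moser iteration to $L^\infty$) is viable but more circuitous than the paper's: there the Weitzenb\"ock identity $\nabla_A^*\nabla_A B=2d_A^+d_A^*B+(\text{curvature terms})$ is applied directly to get $\Delta_g|B|^2\le -4t|B|^6+4\lambda_\tau|B|^5+\lambda_X|B|^2$, and the maximum principle delivers $\|B\|_{L^\infty}\le C_{t,\tau,X}\lesssim t^{-1/4}$ in one stroke, after which the $L^2$ bound on $d_A^*B=-{*}d_AB$ follows by the same pairing you wrote down.
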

	\begin{theorem}\label{m2}
		Continue the above notation, denote
		$$\mathcal{M}_{PVRVW,\kappa(P)}(t,\tau):=\mathcal{M}_{PVRVW,\kappa(P)}^*(t,\tau)\cup\{A\in\mathcal{A}_k (P)|F_A^+=0\}/\mathcal{G}_{k+1}(P)$$
		i.e., the moduli space of solutions of \eqref{yz} without the restriction $B\not\equiv 0$. If the character number $\kappa(P)$ of $P$ satisfies
		$$\kappa(P)\geq-\frac{\mathrm{vol}(X)C_{t,\tau,X}^4}{48\pi^2},$$ 
		here $\mathrm{vol}(X)$ denotes the volume of the compact manifold $X$, then we can construct the Ulhenbeck closure $\overline{\mathcal{M}}_{PVRVW,\kappa(P)}(t,\tau)$ of ${\mathcal{M}}_{PVRVW,\kappa(P)}(t,\tau)$ in 
		$$\mathcal{IM}_{PVRVW,\kappa(P)}(t,\tau):=\bigcup_{l=0}^\infty\mathcal{M}_{PVRVW,\kappa(P)-l}(t,\tau)\times\mathrm{Sym}^l(X)$$
		such that  $\overline{\mathcal{M}}_{PVRVW,\kappa(P)}(t,\tau)\subset\mathcal{IM}_{PVRVW,\kappa(P)}(t,\tau)$   is  sequentially compact.
	\end{theorem}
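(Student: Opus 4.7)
The plan is to follow the Uhlenbeck weak-compactness program of Donaldson~\cite{D1,D2}, adapted to a coupled system as in the $PU(2)$-monopole analysis of Feehan--Leness~\cite{FL}, with the decisive new ingredient being the \emph{a priori} bounds $\Vert B\Vert_{L^\infty}\le C_{t,\tau,X}$ and $\Vert d_AB\Vert_{L^2}\le K_{t,\tau,X}$ supplied by Theorem~\ref{m1}. Starting from a sequence of gauge classes $[(A_n,B_n)]\in\mathcal{M}_{PVRVW,\kappa(P)}(t,\tau)$, I would extract a subsequence converging, away from a finite bubble set $\Sigma=\{x_1,\dots,x_N\}\subset X$ absorbing non-negative integer charges $l_1,\dots,l_N$, to a solution $(A_\infty,B_\infty)$ of \eqref{yz} on a bundle $P_\infty\to X$ with $\kappa(P_\infty)=\kappa(P)-\sum_j l_j$, giving a well-defined point of $\mathcal{IM}_{PVRVW,\kappa(P)}(t,\tau)$. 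Points in the ASD stratum $B\equiv 0$ are handled by the classical Uhlenbeck theorem directly, so the substantive work lies in the $B\not\equiv 0$ case.

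The first step is to promote the pointwise bound on $B$ to a uniform $L^2$ bound on the full curvature $F_{A_n}$. The second equation of \eqref{yz} gives $\Vert F_{A_n}^+\Vert_{L^\infty}\le\tfrac{1}{8}\Vert[B_n\centerdot B_n]\Vert_{L^\infty}\le c\,C_{t,\tau,X}^2$, hence $\Vert F_{A_n}^+\Vert_{L^2}^2\le c'\,\mathrm{vol}(X)C_{t,\tau,X}^4$. Combining this with the Chern--Weil expression of $\kappa(P)$ in terms of $\Vert F_A^\pm\Vert_{L^2}$, together with the hypothesis $\kappa(P)\ge -\mathrm{vol}(X)C_{t,\tau,X}^4/(48\pi^2)$, produces the required uniform $L^2$ bound on $F_{A_n}$. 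Uhlenbeck's weak-compactness theorem then yields, after passing to a subsequence, a finite bubble set $\Sigma\subset X$, $L^2_{k+1,\mathrm{loc}}(X\setminus\Sigma)$ gauge transformations $g_n$, a limit bundle $P_\infty$ with $\kappa(P_\infty)\le\kappa(P)$, and a limit connection $A_\infty\in\mathcal{A}_k(P_\infty)$ such that $g_n^*A_n\to A_\infty$ weakly in $L^2_{k,\mathrm{loc}}(X\setminus\Sigma)$; the charge loss $l_j$ at each bubble point is an integer equal to the Chern number of a rescaled bubble instanton on $S^4$.

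For the $B$-component I would use weak-$*$ $L^\infty$ and weak $L^2_1$ compactness to extract a subsequential limit $B_\infty$, then apply interior elliptic bootstrapping to the perturbed $B$-equation on each precompact subdomain of $X\setminus\Sigma$, using that the lower-order terms in the $A$-dependent operator $d_A^+d_A^*$ are controlled by the $L^2_k$-convergence of $g_n^*A_n$. This upgrades the convergence of $B_n$ to $L^2_{k,\mathrm{loc}}$-strong, and $(A_\infty,B_\infty)$ then satisfies \eqref{yz} on $X\setminus\Sigma$. To extend across $\Sigma$ I note that $\Vert F_{A_\infty}^+\Vert_{L^\infty}$ is still controlled by $\Vert B_\infty\Vert_{L^\infty}^2$, so Uhlenbeck's removable-singularities theorem extends $A_\infty$ smoothly across each $x_j$ after a further gauge transformation; the uniform pointwise bound on $B_\infty$, fed into the perturbed $B$-equation, extends $B_\infty$ by the standard removable-singularity argument for $L^\infty$-bounded solutions of semilinear elliptic systems on a punctured ball. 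Sequential compactness of $\overline{\mathcal{M}}_{PVRVW,\kappa(P)}(t,\tau)$ inside $\mathcal{IM}_{PVRVW,\kappa(P)}(t,\tau)$ then follows from a standard diagonal extraction across the strata $\mathcal{M}_{PVRVW,\kappa(P)-l}(t,\tau)\times\mathrm{Sym}^l(X)$, as in \cite[Ch.~4]{D2}.

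The step I expect to be the main obstacle is controlling the coupling between $A$ and $B$ during the bubbling analysis, specifically ruling out pathological concentration of the $B$-field at bubble points and verifying that elliptic regularity for the perturbed $B$-equation is uniform in a degenerating background connection. The uniform $L^\infty$ bound from Theorem~\ref{m1} is precisely what prevents concentration of $B$ on a null set, which is why this perturbation framework allows the Ulhenbeck closure to be built, whereas the unperturbed reduced Vafa--Witten equation resists such a compactness argument.
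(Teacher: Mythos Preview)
Your proposal is correct and follows essentially the same route as the paper: derive a uniform $L^2$ bound on $F_A$ from the $L^\infty$ bound on $B$ via the second equation of \eqref{yz} and Chern--Weil, then feed the resulting bounds into the Uhlenbeck weak-compactness and removable-singularities machinery of \cite{FL,D2,Ma}. The paper packages the latter step as a dedicated removable-singularities theorem requiring finiteness of $\int(|F_A|^2+|d_AB|^2+|B|^4)$, which is exactly what the three a priori bounds $\|B\|_{L^\infty}\le C_{t,\tau,X}$, $\|d_AB\|_{L^2}\le K_{t,\tau,X}$, and the curvature bound supply; it then cites \cite[Theorem~4.20]{FL} rather than spelling out the bubbling and bootstrapping that you sketch.

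One point to sharpen: the hypothesis $\kappa(P)\ge -\mathrm{vol}(X)C_{t,\tau,X}^4/(48\pi^2)$ is \emph{not} what produces the uniform $L^2$ bound on $F_{A_n}$---that bound is automatic from $\|F_A\|_{L^2}^2 = 8\pi^2\kappa(P) + 2\|F_A^+\|_{L^2}^2$ with $\kappa(P)$ fixed and $\|F_A^+\|_{L^2}$ controlled. The paper instead uses the hypothesis, together with the sharp pointwise inequality $\bigl|\tfrac{1}{8}[B\centerdot B]\bigr|^2\le\tfrac{1}{6}|B|^4$ (which is where the constant $48\pi^2$ originates), to show that $\mathcal{M}_{PVRVW,\kappa(P)-l}(t,\tau)$ is empty once $l>\kappa(P)+\mathrm{vol}(X)C_{t,\tau,X}^4/(48\pi^2)$, so that $\mathcal{IM}_{PVRVW,\kappa(P)}(t,\tau)$ is a \emph{finite} union of strata. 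This finiteness is what makes your diagonal extraction across strata terminate and is the actual role of the numerical hypothesis in the argument.
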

	\begin{theorem}\label{m3}
	When $X=B^4$, $G=SU(2)$ or $SO(3)$ and $P=B^4\times G$,  then for any generic parameter $(t,\tau)\in\mathcal{T}^r_{fc}\subset\mathcal{T}^r$, there is a positive number $c_\tau$, depending only on $\tau$, such that when $t>c_\tau$, the moduli space $\mathcal{M}_{PVRVW,\kappa(P)}^*(t,\tau)$ is compact.
\end{theorem}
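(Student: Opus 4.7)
The plan is to combine the $L^\infty$-bound on $B$ from Theorem~\ref{m1}, which becomes arbitrarily strong as $t\to\infty$, with Uhlenbeck's small-energy gauge-fixing theorem for the trivial bundle on the ball, and to handle the potential degeneration $B\to 0$ by a testing argument that is available only for generic $(t,\tau)$.

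Fix $\tau$ and choose $c_\tau$ so large that, by Theorem~\ref{m1}, $C_{t,\tau,B^4}$ and $K_{t,\tau,B^4}$ are strictly below the Uhlenbeck small-energy threshold $\varepsilon_0$ for the product bundle $B^4\times G$ whenever $t>c_\tau$. Then every representative $(A,B)$ of a class in $\mathcal{M}^*_{PVRVW,\kappa(P)}(t,\tau)$ satisfies $\|B\|_{L^\infty}\leq C_{t,\tau,B^4}$, and the curvature equation of \eqref{yz} gives
\[
\|F_A^+\|_{L^\infty}\leq \tfrac{1}{8}\|[B\centerdot B]\|_{L^\infty}\leq c\,C_{t,\tau,B^4}^2.
\]
Because $P=B^4\times G$ is trivial and $b_1(B^4)=b_2^+(B^4)=0$, the Chern--Simons boundary integral on $\partial B^4$ combined with the $L^\infty$-smallness of $F_A^+$ upgrades to a uniform bound $\|F_A\|_{L^2}<\varepsilon_0$, after enlarging $c_\tau$ if necessary.

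Once $\|F_A\|_{L^2}<\varepsilon_0$, Uhlenbeck's gauge-fixing theorem produces for each $(A,B)$ a gauge transformation $g$ with $d^*(g\cdot A)=0$ and $\|g\cdot A\|_{L^2_1}\lesssim\|F_A\|_{L^2}$. In Coulomb gauge the system \eqref{yz} is elliptic in $(A,B)$ with polynomial nonlinearities, and standard bootstrapping, starting from the $L^\infty$-bound on $B$, the $L^2$-bound on $d_AB$ and the $L^2_1$-bound on $A$, yields uniform $L^2_{k+1}$ estimates on representatives $(A_n,B_n)$ of any sequence in $\mathcal{M}^*_{PVRVW,\kappa(P)}(t,\tau)$. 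Rellich--Kondrachov then supplies a subsequence converging in $L^2_k$ to a limit $(A_\infty,B_\infty)$ solving \eqref{yz}.

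The main obstacle is to ensure that $B_\infty\not\equiv 0$, so that $(A_\infty,B_\infty)$ lies in $\mathcal{M}^*$ rather than on the ASD boundary permitted by the Uhlenbeck closure of Theorem~\ref{m2}. Pairing the first equation of \eqref{yz} with $B$ and integrating gives the identity
\[
\|d_A^*B\|_{L^2}^2 + t\int_{B^4}|B|^6\,dV + \tau\int_{B^4}\langle[[B\centerdot B]\centerdot[B\centerdot B]],B\rangle\,dV = 0;
\]
the sextic $t$-term is strictly positive on $\mathcal{M}^*$, so the quintic $\tau$-term must balance it. For generic $(t,\tau)$ a careful sign analysis of this balance should yield a positive lower bound on $\|B\|_{L^6}$ along $\mathcal{M}^*$, which the limit $B_\infty$ then inherits. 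This final genericity/sign step is the technical heart of the proof; once it is in place the compactness claim follows from the preceding Uhlenbeck--bootstrap machinery.
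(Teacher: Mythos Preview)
Your overall strategy coincides with the paper's: use $C_{t,\tau,X}\to 0$ from Theorem~\ref{ulh} to force $\|F_A\|_{L^2}$ below the Uhlenbeck constant, pass to Coulomb gauge with a uniform bound on $A$ (the paper quotes a version of Uhlenbeck's theorem, Theorem~\ref{pb}, that already gives $\|A\|_{L^2_k}\leq C_4\|F_A\|_{L^2}$ rather than only $L^2_1$), then bootstrap the first equation via $d_{A}d_{A}B=[F_A^+,B]$ and its iterates to bound $B$ in $L^2_k$, and extract a convergent subsequence by Rellich. One minor simplification you missed: the paper gets the $L^2$ curvature bound directly from Chern--Weil on the trivial bundle, $\|F_A\|_{L^2}^2=2\|F_A^+\|_{L^2}^2\leq \tfrac{1}{3}\mathrm{vol}(X)\,C_{t,\tau,X}^4$, with no appeal to a boundary Chern--Simons term.

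The step you flag as the ``technical heart''---ruling out $B_\infty\equiv 0$ so that the limit stays in $\mathcal{M}^*$---is \emph{not} carried out in the paper. Section~8 ends with ``it's easy to see that $(A',B')$ is also a solution to \eqref{yz}'' and never verifies that $B'\not\equiv 0$ (nor that $A'$ remains irreducible), although both conditions are required for $[A',B']\in\mathcal{M}^*$. So you have correctly identified a genuine gap that the paper simply leaves open. Your proposed fix, however, does not close it either: the integrated identity only yields $t\int_X|B|^6\leq \lambda_\tau\int_X|B|^5$, an inequality that is homogeneous-compatible with $B\to 0$ and produces no positive lower bound on $\|B\|_{L^6}$, whatever the sign structure of the quintic $\tau$-term or the genericity of $(t,\tau)$. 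This step therefore remains unproved in both your write-up and the paper.
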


	\begin{remark}
		By selecting the solutions to the reduced Vafa-Witten equation \eqref{ac} with some fixed $L^2$ bounds $b\in\mathbb{R}^+$, Mares constructs the Ulhenbeck closure of the \emph{b-truncated moduli spaces}\cite[Section 3.5]{Ma}. To some extent, Theorem \ref{m2} can be regarded as a generalization of Mares' work.
	\end{remark}

	\section{Some preliminaries}

	We list some elementary definitions and notation used in this paper below. Let $V$ be a finite dimensional inner product space, $\{ e_1,\cdots, e_n\}$  an orthonormal basis for $V$ and  $\{e^1,\cdots,e^n\}$  the dual basis for $V^*$. For any $\alpha\in\Lambda^{p}V^*$, $\beta\in\Lambda^{q}V^*$, we define
	$$\alpha\centerdot\beta=(-1)^{p-1}\sum_{i=1}^n(\iota_{e_i}\alpha)\wedge(\iota_{e_i}\beta)\in\Lambda^{p+q-2}V^*,$$
	where $\iota_{e_i}$ is the contraction with $e_i$.
	
	The exterior algebra $\L^\bullet V^*$ inherits an inner product, such that $\{ e^{i_1}\wedge e^{i_2}\wedge\cdots\wedge e^{i_{p}}; 0\leq p\leq n, i_1 <i_2 <\cdots <i_p\}$ forms an orthonormal basis.  The inner product of two elements $\alpha,\beta \in \Lambda^\bullet V^*$ is denoted as $\alpha\cdot\beta$.

	Replacing $V$ by the tangent space $T_xX$ at any point $x\in X$, we can define the products $\centerdot$ and $\cdot$ for differential forms. We can equip the Lie algebra $\mathfrak{g}$ of $G$ with an invariant inner product $\langle\cdot,\cdot\rangle$, here invariance means $\langle [\xi,\eta],\zeta\rangle=\langle \xi,[\eta,\zeta]\rangle$, $\forall \xi,\eta,\zeta \in\mathfrak{g}$. If $\mathfrak{g}$ is $\mathfrak{su}(2)$, an invariant inner product is given by $\langle \xi,\eta\rangle=-\frac{1}{2}\mathrm{tr}(\xi\eta)$ \cite[(A.18)]{Ma}, where $\xi,\eta\in\mathfrak{su}(2)$ are regarded as matrices.
	
	Let $\Lambda^p(X)$ the space of real-valued $p$-forms on $X$ and $\Lambda^{2,\pm}(X)$ the space of real-valued (anti-) self-dual 2-forms on $X$ respectively. For $\fg_P$-valued forms, for example, if $\omega_1=\xi_1\otimes\alpha_1$ and $\omega_2=\xi_2\otimes\a_2$, where $\xi_1,\xi_2\in\Lambda^0(X, \fg_P)$ and $\alpha_1\in\Lambda^p(X)$, $\a_2\in\Lambda^q(X)$, we define $[\omega_1\centerdot\omega_2] =[\xi_1,\xi_2]\otimes\alpha_1\centerdot\a_2$ and $[\omega_1\cdot\omega_2]=[\xi_1,\xi_2]\otimes\alpha_1\cdot\a_2$, $\langle\omega_1\cdot \omega_2\rangle=\langle \xi_1, \xi_2\rangle \a_1\cdot \a_2$. See  \cite[Appendix]{Ma} for more detail.
	
	The \emph{rank} of a section $B\in\Lambda^{2,+}(X, \fg_P)$ is defined as follows. Choose local frames for $\fg_P$ and  $\Lambda^{2,+}(T^*X)$, then the section $B$ is represented by a $d\times 3$ matrix-valued function with respect to the local frames, where $d=\dim G$. The rank of $B$ at a point of $X$ is just the rank of the matrix at that point, and $\rank(B)$ is the maximum of the pointwise rank over $X$. If $\mathfrak{g}$ is $\mathfrak{su}(2)$, then the maximum of the rank of $B$ is $d=3$.

	Note that the Lie algebra of $SU(2)$ and $SO(3)$ are isomorphic: $\mathfrak{su}(2)\cong\mathfrak{so}(3)$, so in the following, we use $\mathfrak{su}(2)$ instead of $\mathfrak{so}(3)$ wherever $\mathfrak{so}(3)$ appears.

	\section{variated reduced Vafa-Witten equations and their perturbations}
	
	In this section we lay out the general set-up of Vafa-Witten equations and construct a perturbation for getting the transversality result. 
	
	\subsection{The VRVW map}
	
	We know that $\mathcal{A}_k (P)$ is an affine space, so for any fixed $L_k^2$ connection $A_0$, we have
	$$\mathcal{A}_k (P)=A_0+L_k^2(X,\mathfrak{su}(2)_P\otimes\Lambda^1).$$
	The \emph{configuration spaces} and \emph{target spaces} are defined by
	$$\begin{aligned}
		\mathcal{C}_k(P)&:=\mathcal{A}_k (P)\times L_k^2(X,\mathfrak{su}(2)_P\otimes\Lambda^{2,+}),\\
		\mathcal{C}_{k-1}'(P)&:=L_{k-1}^2(X,\mathfrak{su}(2)_P\otimes\Lambda^{2,+})\oplus L_{k-1}^2(X,\mathfrak{su}(2)_P\otimes\Lambda^{2,+}).
	\end{aligned}$$
	It's easy to see that $\mathcal{C}_k(P)$ is also an affine vector space, which means for any fixed $L_k^2$ pair $(A_0,B_0)\in\mathcal{C}_k(P)$, we have
	$$\mathcal{C}_k(P)=(A_0,B_0)+L_k^2(X,\mathfrak{su}(2)_P\otimes\Lambda^1)\oplus L_k^2(X,\mathfrak{su}(2)_P\otimes\Lambda^{2,+}).$$
	
	The \emph{VRVW map} is defined by
	\begin{equation}\begin{aligned}
			\mathcal{VRVW}&:\mathcal{C}_k(P)\to\mathcal{C}_{k-1}'(P),\\
			\mathcal{VRVW}(A,B)&:=\begin{pmatrix}
				d_A^+d_A^*B\\
				F_A^++\frac{1}{8}[B\centerdot B]\\
			\end{pmatrix}.
		\end{aligned} \label{cc}
	\end{equation}
	The action of an $L_{k+1}^2$ gauge transformation $\zeta\in\mathcal{G}_{k+1}(P)$ on any $(A,B)\in\mathcal{C}_k(P)$ is given by
	$$\zeta\cdot(A,B):=(A-(d_A\zeta^{-1})\zeta,\zeta^{-1}B\zeta),$$
	and a connection $A$ is called \emph{irreducible} if $\Stab(A):=\{\zeta\in\mathcal{G}_{k+1}(P):\zeta\cdot A=A\}=Z(G)$, the center of $G$. 
	
	It's not hard to see that
	$$\mathcal{VRVW}\left(\zeta\cdot(A,B)\right)=\begin{pmatrix}
		\zeta^{-1}\left(d_A^+d_A^*B\right)\zeta\\
		\zeta^{-1}\left(F_A^++\frac{1}{8}[B\centerdot B]\right)\zeta\\
	\end{pmatrix},$$
	so the map $\mathcal{VRVW}$ is gauge-equivariant (cf.  \cite[\S 3.2.1]{Ma}). Define the quotient space $\mathcal{B}_k (P):=\mathcal{C}_k (P)/\mathcal{G}_{k+1}(P)$ (cf. \cite[Propostion 2.8]{FL},  \cite[Theorem 3.2.3]{Ma}).
	
	Denote by $\mathcal{C}_k^\diamond (P) \subset\mathcal{C}_k(P)$ the pairs $(A,B)$ with $A$ irreducible and $B\not\equiv 0$. We call it the \emph{non-asd} part of $\mathcal{C}_k(P)$ (When $B\equiv 0$, equation \eqref{ad} becomes ASD equation $F_A^+=0$, that's why when call the part of solutions where $B\not\equiv 0$ the non-asd part). The slice theorem (cf.  \cite[Propostion 2.8]{FL} \cite[Theorem 3.2.3]{Ma}) implies that the quotient space $\mathcal{B}_k^\diamond (P):=\mathcal{C}_k^\diamond (P)/\mathcal{G}_{k+1}(P)\subset\mathcal{B}_k(P)$ is an open and smooth Hilbert submanifold of $\mathcal{B}_k(P)$. 
	
	\subsection{The perturbed  VRVW map}
	
	We now construct the \emph{perturbed VRVW map} as follows:
	\begin{equation}\begin{aligned}
			\mathcal{PVRVW}&:\mathcal{T}^r\times\mathcal{C}_k(P)\to\mathcal{C}_{k-1}'(P),\\
			\mathcal{PVRVW}(t,\tau,A,B)&:=\begin{pmatrix}
				d_A^+d_A^*B+t\langle B,B\rangle^2B+\tau[[B\centerdot B]\centerdot[B\centerdot B]]\\
				F_A^++\frac{1}{8}[B\centerdot B]
			\end{pmatrix}, \label{cd}
		\end{aligned}
	\end{equation}
	where $\mathcal{T}^r:=\R^+\times C^r(GL(\Lambda^{2,+}))$ denotes the Banach manifold of $C^r$ perturbation parameters $\tau$ (with $r$ large enough, say $r>k$). The gauge group $\mathcal{G}_{k+1}(P)$ acts trivially on the space of perturbations $\mathcal{T}^r$, so the map $\mathcal{PVRVW}$ is also gauge-equivariant, and $\mathcal{PM}_k(P):=\mathcal{PVRVW}^{-1}(0)/\mathcal{G}_{k+1}(P)\subset\mathcal{T}^r\times\mathcal{B}_k(P)$ is the parametrized moduli space of the perturbed VRVW equation. Let $\mathcal{PM}_k^\diamond(P)=\mathcal{PM}_k(P)\cap(\mathcal{T}^r\times\mathcal{B}_k^\diamond(P))$. Note that the gauge-equivariant map $\mathcal{PVRVW}$ defines a section of a Banach vector bundle $\overline{\mathcal{E}}_k$ over $\mathcal{T}^r\times\mathcal{B}_k^\diamond(P)$ with total space $\overline{\mathcal{E}}_k:=(\mathcal{T}^r\times\mathcal{C}_k^\diamond(P))\times_{\mathcal{G}_{k+1}(P)}\mathcal{C}'_{k-1}(P)$. In particular, the parametrized moduli space $\mathcal{PM}_k^\diamond(P)$ is the zero set of the section $\mathcal{PVRVW}(\cdot,\cdot)$ of the vector bundle $\overline{\mathcal{E}}_k$ over $\mathcal{T}^r\times\mathcal{B}_k^\diamond(P)$. 
	
	\subsection{The Kuranishi complex}
	
	Recall that the deformation complex associated to an ASD connection $A$ is given by
	$$0\to\Omega^0(X,\mathfrak{g}_P)\xrightarrow{d_A}\Omega^1(X, \mathfrak{g}_P)\xrightarrow{d^+_A}\Omega^{2,+}(X, \mathfrak{g}_P)\to 0.$$
	The first differential $d_A$ is the infinitesimal action of the gauge transformation and the second differential $d_A^+$ is the linearization of the equation $F_A^+=0$. The cohomology groups $H_A^\bullet$ have their geometric meaning: $H_A^0=\mathrm{Ker}~d_A$ is zero if and only if $A$ is irreducible, $H^1_A=\mathrm{Ker}~d^+_A/\mathrm{Im}~d_A$ is the formal tangent space $T_{[A]}\mathcal{M}_{ASD}$, here $[A]$ denotes the equivalent class of $A$ under the gauge transformation, $H_A^2=\mathrm{Coker}~d_A^+$ is zero if and only if the map $A\mapsto F_A^+$ vanishes transversely at $A$, or equivalently, $d_A^+:\Omega^1(X, \mathfrak{g}_P)\to\Omega^{2,+}(X, \mathfrak{g}_P)$ is surjective. And the sum of $d_A^*:\Omega^1(X,\mathfrak{g}_P)\to\Omega^0(X,\mathfrak{g}_P)$ and $d_A^+:\Omega^1(X,\mathfrak{g}_P)\to\Omega^{2,+}(X, \mathfrak{g}_P)$:
	$$d_A^*+d_A^+:\Omega^1(X, \mathfrak{g}_P)\to\Omega^0(X, \mathfrak{g}_P)\oplus\Omega^{2,+}(X,\mathfrak{g}_P)$$
	is elliptic. The ellipticity of this operator is crucial to the construction of Donaldson's polynomial  invariants.
	
	For each chosen perturbation parameters $(t,\tau)$, the associated deformation complex of the perturbed VRVW map is
	$$\begin{aligned}
		0\to L^2_{k+1}(X,\Lambda^0\otimes\mathfrak{su}(2)_P)&\xrightarrow{d^0_{(A,B)}}L^2_k (X,\mathfrak{su}(2)_P\otimes\Lambda^1)\oplus L^2_k(X,\mathfrak{su}(2)_P\otimes\Lambda^{2,+})\\
		&\xrightarrow{d^1_{(A,B)}} L^2_{k-1}(X,\mathfrak{su}(2)_P\otimes\Lambda^{2,+})\oplus L^2_{k-1}(X,\mathfrak{su}(2)_P\otimes\Lambda^{2,+})\to 0,
	\end{aligned}$$
	where
	\begin{equation}
		d^0_{(A,B)}(\xi)=(d_A\xi, [B, \xi]) 
		\label{cf} \end{equation}
	is the linearization of the action of gauge group. The calculation of $d^1_{(A,B)}$ is more involved. First on 4-manifold $X$, let * denotes the Hodge-star operator, we have $d^*=-*d*$ on forms, and for $(A,B)\in\mathcal{C}_k(P)$ we have
	\begin{align*}
		&d_A^+d_A^*B\\
		=&-\frac{1}{2}(1+*)d_A*d_AB\\
		=&-\frac{1}{2}(1+*)d_A*(dB+[A,B])\\
		=&-\frac{1}{2}(1+*)d_A\left(*dB+*[A,B]\right)\\
		=&-\frac{1}{2}(1+*)\left(d\left(*dB+*[A,B]\right)+[A,*dB+*[A,B]]\right),
	\end{align*}
	hence the linearization of the map $(A,B)\to d_A^+d_A^*B$ is 
	\begin{align*}
		(a,b)\to&-\frac{1}{2}(1+*)(d(*db+*[a,B]+*[A,b])+[a,*dB+*[A,B]]\\
		&+[A,*db+*[a,B]+*[A,b]])\\
		=&-\frac{1}{2}(1+*)(d_A(*d_Ab+*[a,B])+[a,*d_AB])\\
		=&d_A^+d_A^*b-\frac{1}{2}(1+*)d_A*[a,B]-\frac{1}{2}(1+*)[a,*d_AB]\\
		=&d_A^+d_A^*b+\frac{1}{2}(1+*)(d_A^*[a,B]+[a,d_A^*B])\\
		=&d_A^+d_A^*b+\frac{1}{2}(1+*)[d_A^*a,B]\\
		=&d_A^+d_A^*b+[d_A^*a,B].
	\end{align*}
	Here we apply the equality $$d_A^*[a,B]=[d_A^*a,B]-[a,d_A^*B]$$ for $(A,B)\in\mathcal{C}_k (P)$ and $a\in L_k^2(X,\mathfrak{su}(2)_P\otimes\Lambda^{1})$. So
	\begin{equation} \begin{aligned}
			d^1_{(A,B)}(a,b)=\begin{pmatrix}
				d_A^+d_A^*b+[d_A^*a,B]+4t\langle B,B\rangle\langle B,b\rangle B+t\langle B,B\rangle^2b+4\tau[[b\centerdot B]\centerdot[B\centerdot B]]\\
				d_A^+a+\frac{1}{4}[b\centerdot B]
			\end{pmatrix}
		\end{aligned} \label{cg}
	\end{equation}
	is the linearization of the perturbed VRVW maps. 
	Also we have (cf. \cite{Fe, Ma})
	$$\begin{aligned}
		d^1_{(A,B)}\circ d^0_{(A,B)}(\xi)=\begin{pmatrix}
			[\xi,d_A^+d_A^*B+t\langle B,B\rangle^2B+\tau[[B\centerdot B]\centerdot[B\centerdot B]]]\\
			[\xi,F_A^++\frac{1}{8}[B\centerdot B]]\end{pmatrix}.
	\end{aligned}$$
	Therefore $d^1_{(A,B)}\circ d^0_{(A,B)}=0$ if and only if $\mathcal{PVRVW}(t,\tau,A, B)=0$, i.e. the sequence is a complex if and only if $[A, B]\in\mathcal{M}_{PVRVW}(t,\tau)$.
	
	The $L^2$ adjoint of $d^0_{(A,B)}$ is (cf. \cite{Ma})
	$$d^{0,*}_{(A,B)}(a,b)=d_A^*a+[b\cdot B],$$
	and the combined opertor
	$$\begin{aligned}
		\mathcal{D}_{(A,B)}&=d^{1}_{(A,B)}+ d^{0,*}_{(A,B)}:\begin{matrix} L^2_k(X,\mathfrak{su}(2)_P\otimes\Lambda^1) \\ \oplus \\ L^2_k(X,\mathfrak{su}(2)_P\otimes\Lambda^{2,+}) \end{matrix}\to\begin{matrix} L^2_{k-1}(X,\mathfrak{su}(2)_P\otimes\Lambda^{2,+}) \\ \oplus \\ L^2_{k-1}(X,\mathfrak{su}(2)_P\otimes\Lambda^{2,+})\\ \oplus \\ L^2_{k-1}(X,\mathfrak{su}(2)_P)\end{matrix}
	\end{aligned}$$
	differs from the following operator 
	\begin{equation} \mathcal{D}'_{(A,B)}(a,b)=  (d_A^+d_A^* b+[d_A^*a,B] , d_A^+ a , d_A^* a) 
		\label{cj} \end{equation}
	by zeroth-order terms. $\mathcal{D}'_{(A,B)}$ is the direct sum of two elliptic operators
	\begin{align*}
		d_A^+d_A^*+[d_A^*,B]:\begin{matrix} L^2_k(X,\mathfrak{su}(2)_P\otimes\Lambda^1) \\ \oplus \\ L^2_k(X,\mathfrak{su}(2)_P\otimes\Lambda^{2,+}) \end{matrix}&\to L^2_{k-1}(X,\mathfrak{su}(2)_P\otimes\Lambda^{2,+})\\
		(a,b)&\mapsto d_A^+d_A^* b+[d_A^*a,B]
	\end{align*}
	and
	\begin{align*}
		d_A^++d_A^*:\begin{matrix} L^2_k(X,\mathfrak{su}(2)_P\otimes\Lambda^1)\end{matrix}&\to \begin{matrix} L^2_{k-1}(X,\mathfrak{su}(2)_P\otimes\Lambda^{2,+}) \\ \oplus \\ L^2_{k-1}(X,\mathfrak{su}(2)_P) \end{matrix}\\
		a&\mapsto (d_A^+a,d_A^*a),
	\end{align*}
	hence a Fredholm operator. By the Sobolev multiplication theorem and the Rellich embedding theorem, there is a contiuous Sobolev multiplication map $L_k^2\times L_k^2\to L_k^2$ and the inclusion $L_k^2\subset L_{k-1}^2$ is compact when $k\geq 3$, so we have $\ind(\mathcal{D}_{(A,B)})=\ind(\mathcal{D}'_{(A,B)})$. The index of $d_A^+d_A^*+[d_A^*,B]$ is equal to that of $d_A^+d_A^*$ \cite[Corollary 7.9]{Lm}, which is 0, hence the index of $\mathcal{D}_{(A,B)}$ is equal to the index of $d_A^++d_A^*$, which is (see \cite[Theorem 6.1]{AHS2}\cite{D} and \cite[(2.1.40), (4.2.21) and (4.2.22)]{D2})
	$$\ind(\mathcal{D}_{(A,B)})=\ind(d_A^++d_A^*)=8\kappa(P)-3(1-b_1(X)+b_2^+(X)),$$
	here by the Chern-Weil theory, the characteristic number $\kappa(P)$ is defined by
	$$\kappa(P):=\frac{1}{8\pi^2}\int_{X}\tr(F_A^2)=
	\begin{cases}
		c_2(P),& G=SU(2),\\
		-\frac{1}{4}p_1(P),& G=SO(3).
	\end{cases}$$
	
	It follows that the above complex is an elliptic deformation complex for the perturbed VRVW equation with cohomology groups
	$$H_{(A,B)}^0:=\mathrm{Ker}d^{0}_{(A,B)},H_{(A,B)}^1:=\mathrm{Ker}d^{1}_{(A,B)}/\mathrm{Im}~d^{0}_{(A,B)},H_{(A,B)}^2:=\mathrm{Coker}d^{1}_{(A,B)}.$$
	Similarly, $H_{(A,B)}^0$ is the Lie algebra of the stabilizer of the triple $(A,B)$,  and $H_{(A,B)}^0=0$ if the stabilizer of $(A,B)$ is $Z(G)$, which means $A$ is irreducible, $H_{(A,B)}^1$ is the tangent space $T_{[A, B]}\mathcal{M}_{PVRVW}(t,\tau)$ and if  $\mathrm{Coker}(D\mathcal{PVRVW})_{(A,B)}=0$, then $H_{(A,B)}^2=0$ and $[A, B]$ is a regular point of $\mathcal{M}_{PVRVW}(t,\tau)$. We will prove later that every point of the   part of the moduli space $\mathcal{M}_{PVRVW}^*(t,\tau):=\mathcal{M}_{PVRVW}(t,\tau)\cap\mathcal{B}_k^\diamond(P)$ is regular and $\mathcal{M}_{PVRVW}^*(t,\tau)$ is a smooth manifold of dimension $\ind(\mathcal{D}'_{(A,B)})$.

	\section{Quadratic expansion of the perturbed VRVW map}
	
	We now find the quadratic expansion of $\mathcal{PVRVW}(t,\tau,A, B)$ with fixed perturbation parameters to build the regularity results (see  \cite[\S3]{FL} for the perturbed $PU(2)$-monopole equations and  \cite[\S3.2.2]{Ma} for Vafa-Witten equations). 
	Let $(A_0,B_0)\in\mathcal{C}_k(P)$ be smooth and $(a,b)\in L_k^2(X,\mathfrak{su}(2)_P\otimes\Lambda^1)\oplus L_k^2(X,\mathfrak{su}(2)_P\otimes\Lambda^{2,+})$, then
	\begin{align*}
		&\mathcal{PVRVW}(t,\tau,A_0+a,B_0+b)\\
		=&\begin{pmatrix}d_{A_0+a}^+d_{A_0+a}^*(B_0+b)+t\langle (B_0+b),(B_0+b)\rangle^2(B_0+b)+\tau[[(B_0+b)\centerdot (B_0+b)]\centerdot[(B_0+b)\centerdot (B_0+b)]]\\
			F_{A_0+a}^++\frac{1}{8}[(B_0+b)\centerdot (B_0+b)]\end{pmatrix}\\
		=&\mathcal{PVRVW}(t,\tau,A_0,B_0)+d_{(A_0,B_0)}^1(a,b)\\
		&+\begin{pmatrix}-d_{A_0}^+[a,b]-[a,*d_{A_0}b]^+-[a,*[a,B_0+b]]^++4t\langle B_0,b\rangle^2B_0+2\tau[[b\centerdot b]\centerdot[B_0\centerdot B_0]]+\ldots\\
			\frac{1}{2}[a\wedge a]^++\frac{1}{8}[b\centerdot b]\end{pmatrix} \\
		=&\mathcal{PVRVW}(t,\tau,A_0,B_0)+d_{(A_0,B_0)}^1(a,b)+\{(a,b),(a,b)\},
	\end{align*}
	where
	$$\begin{aligned}
		&\{(a,b),(a,b)\}\\
		:=&\begin{pmatrix}-d_{A_0}^+[a,b]-[a,*d_{A_0}b]^+-[a,*[a,B_0+b]]^++4t\langle B_0,b\rangle^2B_0+2\tau[[b\centerdot b]\centerdot[B_0\centerdot B_0]]+\ldots\\
			\frac{1}{2}[a\wedge a]^++\frac{1}{8}[b\centerdot b]\end{pmatrix},
	\end{aligned}$$
	a quadratic form of $(a,b)$. Given $(A_0,B_0)\in\mathcal{C}_k(P)$ and $(u_0,v_0)\in\mathcal{C}_{k-1}'(P)$, we find solutions to the inhomogeneous equation $\mathcal{PVRVW}(t,\tau,A_0+a,B_0+b)=(u_0,v_0)$ for the pair $(a,b)$. To make the equation elliptic, we impose the gauge-fixing condition
	$$d^{0,*}_{(A,B)}(a,b)=w,$$
	then the equation we considered is an elliptic equation of the following form
	\begin{equation} \mathcal{D}_{(A_0,B_0)}(a,b)+\{(a,b),(a,b)\}=(w,u,v),
		\label{ccc}
	\end{equation}
	here $(u,v)=(u_0,v_0)-\mathcal{PVRVW}(t,\tau,A_0,B_0)$. Equation \eqref{ccc} is called \emph{the perturbed   VRVW equation under Coulomb gauge}.
	
	Mimicking the regular results of  \cite[\S3]{FL} and  \cite[\S3.3]{Ma}, we have
	\begin{theorem} \label{tj} {\bf (Global estimate for $L_1^2$ solutions to the inhomogeneous perturbed VRVW plus Coulomb slice equations, cf.  \cite[Corollary 3.4]{FL}, \cite[Theorem 3.3.1]{Ma})\/}
		Let $(X,g)$ be a compact, oriented and smooth Riemannian 4-manifold, $P\to X$ a principle $SU(2)$- or $SO(3)$-bundle on $X$ and let $(A_0,B_0)$ be a $C^{\infty}$ configuration in $\mathcal{C}(P)$. Then there is a positive constant $\epsilon=\epsilon(A_0,B_0)$ such that if $(a,b)$ is an $L_1^2$ solution to the  equation \eqref{ccc}, where $(w,u,v)$ is in $L_k^2$ and $\vert \vert (a,b)\vert \vert _{L^4(X)}<\epsilon$, and $k\geq 3$ is an integer, then $(a,b)\in L_{k+1}^2$ and there is a polynomial $Q_k(x,y)$, with positive real coefficients, depending at most on $(A_0,B_0),k$ such that $Q_k(0,0)=0$ and
		$$\vert \vert (a,b)\vert \vert _{L_{k+1,A_0}^2(X)}\leq Q_k\Big(\vert \vert (w,u,v)\vert \vert _{L_{k,A_0}^2(X)},\vert \vert (a,b)\vert \vert _{L^2}\Big).$$
		In particular, if $(w,u,v)$ is in $C^r$ then $(a,b)$ is in $C^{r+1}$. 
	\end{theorem}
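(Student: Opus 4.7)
The plan is a standard elliptic bootstrap, following the templates of Feehan--Leness \cite[Corollary~3.4]{FL} and Mares \cite[Theorem~3.3.1]{Ma} but adapted to the new quartic and quintic nonlinearities introduced by the perturbation. First I would rewrite the equation \eqref{ccc} as
\begin{equation*}
\mathcal{D}'_{(A_0,B_0)}(a,b) = (w,u,v) - \{(a,b),(a,b)\} - \bigl(\mathcal{D}_{(A_0,B_0)}-\mathcal{D}'_{(A_0,B_0)}\bigr)(a,b),
\end{equation*}
where $\mathcal{D}'_{(A_0,B_0)}$ from \eqref{cj} is the elliptic principal part and $\mathcal{D}-\mathcal{D}'$ is a lower-order operator with smooth coefficients determined by $(A_0,B_0)$. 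Since $\mathcal{D}'_{(A_0,B_0)}$ decomposes as the direct sum of two elliptic operators (a Douglis--Nirenberg system, of order $2$ in $b$ and order $1$ in $a$), the standard linear elliptic estimate yields
\begin{equation*}
\|(a,b)\|_{L_{j+1,A_0}^2}\leq C_j\bigl(\|\mathcal{D}'_{(A_0,B_0)}(a,b)\|_{L_{j,A_0}^2}+\|(a,b)\|_{L^2}\bigr),\qquad j\geq 0,
\end{equation*}
with $C_j$ depending only on $(A_0,B_0,j)$, and the strategy is to promote the a priori $L_1^2$ regularity one Sobolev index at a time.

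The base case $j=1$ is the delicate step and the only place where the smallness hypothesis is used. The nonlinearity $\{(a,b),(a,b)\}$ collects the quadratic pieces $d_{A_0}^+[a,b]$, $[a,*d_{A_0}b]^+$, $[a,*[a,B_0+b]]^+$, $\tfrac{1}{2}[a\wedge a]^+$, $\tfrac{1}{8}[b\centerdot b]$, together with cubic through quintic contributions coming from expanding $t\langle B,B\rangle^2 B+\tau[[B\centerdot B]\centerdot[B\centerdot B]]$ around $B_0$. The hardest pieces are the mixed-order quadratic ones $d_{A_0}^+[a,b]$ and $[a,*d_{A_0}b]^+$, which carry one derivative on $(a,b)$ times a bare factor of $(a,b)$. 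Using H\"older together with the Sobolev embedding $L_2^2\hookrightarrow L_1^4$ on the $4$-manifold $X$, I would estimate
\begin{equation*}
\|da\cdot b\|_{L^2}+\|a\cdot db\|_{L^2}\leq C\,\|(a,b)\|_{L^4}\,\|(a,b)\|_{L_2^2}\leq C\epsilon\,\|(a,b)\|_{L_2^2},
\end{equation*}
and absorb this contribution into the left-hand side of the elliptic estimate once $\epsilon$ is small enough. The cubic through quintic perturbation terms always involve at least one factor of $(a,b)$ in $L^4$ and the remaining factors are either the smooth $B_0$ or further $L^4$-small factors of $(a,b)$, so they cause no new difficulty. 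This produces $(a,b)\in L_2^2$ and the bound $\|(a,b)\|_{L_2^2}\leq Q_1\bigl(\|(w,u,v)\|_{L^2},\|(a,b)\|_{L^2}\bigr)$.

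For the inductive step $j\to j+1$ with $j\geq 2$, either the Sobolev multiplication $L_{j+1}^2\cdot L_{j+1}^2\subset L_j^2$ on a $4$-manifold or (for $j\geq 3$) the algebra property $L_j^2\cdot L_j^2\subset L_j^2$ quoted earlier in the paper controls every monomial in $\{(a,b),(a,b)\}$ by a polynomial in $\|(a,b)\|_{L_{j+1}^2}$ with coefficients depending on $(A_0,B_0,t,\tau)$; in each monomial one factor of $(a,b)$ is taken in the top-regularity space $L_{j+1}^2$ and the remaining ones in lower-regularity Sobolev spaces, so no term competes with the elliptic gain. Iterating from $j=2$ up to $j=k$ and composing the polynomial bounds produces the required $Q_k$, which vanishes at the origin because every monomial carries a factor of $\|(w,u,v)\|$ or $\|(a,b)\|_{L^2}$. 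The $C^{r+1}$ conclusion then follows either by choosing $k$ large enough that $L_{k+1}^2\hookrightarrow C^{r+1}$ via Sobolev embedding, or by rerunning the same bootstrap directly in H\"older spaces using Schauder estimates. The main obstacle throughout is the base step: absorbing the mixed-order quadratic terms requires trading a derivative for an $L^4$ factor via $L_2^2\hookrightarrow L_1^4$, which only closes under the smallness hypothesis $\|(a,b)\|_{L^4}<\epsilon$; once $L_1^2\to L_2^2$ is secured, the remaining induction and the new quartic/quintic perturbation terms add only routine bookkeeping.
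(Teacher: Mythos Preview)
The paper does not supply its own proof of this theorem; it merely records the statement and defers to \cite[Corollary~3.4]{FL} and \cite[Theorem~3.3.1]{Ma}. Your proposal carries out exactly the bootstrap those references perform, correctly adapted to the second-order operator $d_{A_0}^+d_{A_0}^*$ and the extra polynomial perturbation terms, so it is the intended argument. One small refinement worth making explicit: because the principal part is genuinely Douglis--Nirenberg (order~$2$ in $b$, order~$1$ in $a$), the clean elliptic estimate reads $\|a\|_{L_{j+1}^2}+\|b\|_{L_{j+2}^2}\le C(\|\mathcal{D}'(a,b)\|_{L_j^2}+\|(a,b)\|_{L^2})$, and in practice the base step proceeds in two sub-steps---first gain one derivative on $b$ from the first component at $j=0$, then gain one on $a$ from the $(d_{A_0}^+,d_{A_0}^*)$ components at $j=1$ using the improved $b$---rather than uniformly for $(a,b)$ at once; your absorption estimate $\|da\cdot b\|_{L^2}\le C\epsilon\|(a,b)\|_{L_2^2}$ is exactly what closes each of these sub-steps.
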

	By applying Theorem \ref{tj}, we can obtain the following global regularity theorem.
	\begin{theorem} \label{tk}
		{\bf  (Global regularity of $L_k^2$ solutions to the perturbed VRVW equations for $k\geq 3$, cf.  \cite[Proposition 3.7]{FL}, \cite[Theorem 3.3.2]{Ma})} Let $(X,g)$ be a compact, oriented and smooth Riemannian 4-manifold and $P\to X$ a principle $SU(2)$- or $SO(3)$-bundle on $X$. Let $k\geq 3$ be an integer and suppose that $(A,B)$ is an $L_k^2$ solution to $\mathcal{PVRVW}(t,\tau,A, B)=0$ for fixed $C^r$ perturbation parameters $(t,\tau)$, then there is a gauge transformation $\zeta\in L_{k+1}^2(\mathcal{G}_P)$ such that $\zeta\cdot (A,B)$ is $C^\infty$ over $X$.
	\end{theorem}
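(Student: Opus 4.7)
The approach mirrors the regularity arguments of \cite[Proposition 3.7]{FL} and \cite[Theorem 3.3.2]{Ma}: I would approximate $(A,B)$ by a smooth configuration, place the difference into Coulomb gauge via a local slice theorem, and then bootstrap using Theorem \ref{tj}. Concretely, because smooth configurations are dense in $\mathcal{C}_k(P)$ and $L_k^2 \hookrightarrow L^4$ in dimension four (for $k \geq 3$), I would first choose a smooth pair $(A_0,B_0) \in \mathcal{C}(P)$ so close to $(A,B)$ in $L_k^2$ that $\|(A,B)-(A_0,B_0)\|_{L^4}$ is smaller than the threshold $\epsilon(A_0,B_0)$ of Theorem \ref{tj}.

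Next I would invoke a local Coulomb slice theorem at the smooth centre $(A_0,B_0)$, producing $\zeta \in L_{k+1}^2(\mathcal{G}_P)$ such that $\zeta \cdot (A,B) = (A_0+a,\,B_0+b)$ with
$$d^{0,*}_{(A_0,B_0)}(a,b) = d_{A_0}^*a + [b \cdot B_0] = 0$$
and $\|(a,b)\|_{L_k^2}$ comparable to $\|(A,B)-(A_0,B_0)\|_{L_k^2}$. The existence of such a $\zeta$, together with the gain of one derivative in its regularity, follows from the standard inverse-function-theorem argument applied to the elliptic Laplacian-type operator $d^{0}_{(A_0,B_0)}d^{0,*}_{(A_0,B_0)}$ (cf.\ \cite[Proposition 2.8]{FL}, \cite[Theorem 3.2.3]{Ma}); the gauge action on $(A,B)$ is unaffected by the perturbation terms, so the slice machinery carries over verbatim from the Vafa-Witten setting. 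Since $\mathcal{PVRVW}$ is gauge-equivariant, $(a,b)$ then satisfies equation \eqref{ccc} with $w=0$ and inhomogeneity $(u,v) = -\mathcal{PVRVW}(t,\tau,A_0,B_0)$, which is smooth because $(A_0,B_0)$ is smooth and the perturbation parameters are of class $C^r$.

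Having arranged $\|(a,b)\|_{L^4} < \epsilon(A_0,B_0)$, I would apply Theorem \ref{tj} to upgrade $(a,b)$ from $L_k^2$ to $L_{k+1}^2$. A bootstrap then closes the argument: the inhomogeneity $(w,u,v)$ remains smooth, so reapplying Theorem \ref{tj} successively with $k$ replaced by $k+1, k+2, \ldots$ yields $(a,b) \in L_j^2$ for every $j$, and Sobolev embedding forces $(a,b) \in C^\infty$. Consequently $\zeta \cdot (A,B) = (A_0+a,\,B_0+b)$ is smooth over $X$, as claimed.

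The hard part is verifying that the slice-theorem and bootstrap machinery set up for the unperturbed VRVW equation remains valid after the lower-order terms $t\langle B,B\rangle^2 B$ and $\tau[[B\centerdot B]\centerdot[B\centerdot B]]$ are added to the first equation. These are smooth bundle maps, so they contribute only to the quadratic remainder $\{(a,b),(a,b)\}$ in equation \eqref{ccc} and do not alter the principal symbol or ellipticity of $\mathcal{D}'_{(A_0,B_0)}$; Theorem \ref{tj} absorbs them precisely because it is formulated for the perturbed VRVW plus Coulomb slice equations. Once this is noted, the remaining work is a routine adaptation of the schemes in \cite[\S 3]{FL} and \cite[\S 3.3]{Ma}.
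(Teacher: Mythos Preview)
Your proposal is correct and matches the paper's approach: the paper does not spell out a proof at all, but simply asserts that Theorem~\ref{tk} follows ``by applying Theorem~\ref{tj}'' and cites \cite[Proposition~3.7]{FL} and \cite[Theorem~3.3.2]{Ma}. Your outline---smooth approximation, Coulomb slice at the smooth centre, then bootstrap via Theorem~\ref{tj}---is exactly the standard argument from those references, and your observation that the perturbation terms are zeroth order (hence absorbed into $\{(a,b),(a,b)\}$ without affecting the principal symbol) is the one point that needs checking beyond the cited arguments.
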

	
	\section{Transversality of the perturbed VRVW equations}\label{trans}
	
	In this section we verify the perturbed   VRVW map \eqref{cd}, viewed as a section of the Banach vector bundle $\overline{\mathcal{E}}_k$ over $\mathcal{T}^r\times\mathcal{B}_k^\diamond(P)$, is transverse to the zero section of $\overline{\mathcal{E}}_k$ over $\mathcal{T}^r\times\mathcal{B}_k^\diamond(P)$. 
	
	\subsection{The linearization of the perturbed VRVW map}
	
	Fix a perturbation parameter $(t,\tau)\in\mathcal{T}^r$, if $\Gamma:=(t,\tau,A, B) \in \mathcal{T}^r\times\mathcal{B}_k(P)$ satisfies $\mathcal{PVRVW}(\Gamma)=0$, then the linearization of the map $\mathcal{PVRVW}$ at $\Gamma$ is 
	\begin{align*}
		&(D\mathcal{PVRVW})_{\Gamma}(\delta t,\delta\tau,a,b)\\
		=&\begin{pmatrix}(D\mathcal{PVRVW}_1)_{\Gamma}(\delta t,\delta\tau,a,b)\\
			(D\mathcal{PVRVW}_2)_{\Gamma}(\delta t,\delta\tau,a,b)\end{pmatrix}\\
		=&\begin{pmatrix}
			d_A^+d_A^*b+[d_A^*a,B]+4t\langle B,B\rangle\langle B,b\rangle B+\delta t\langle B,B\rangle^2B+4\tau[[b\centerdot B]\centerdot[B\centerdot B]]+\delta\tau[[B\centerdot B]\centerdot[B\centerdot B]]\\
			d_A^+a+\frac{1}{4}[b\centerdot B]
		\end{pmatrix},
	\end{align*}
	where $(\delta t,\delta\tau,a,b)\in\mathcal{T}^r\times T_{[A, B]}\mathcal{M}_{PVRVW}^*(t,\tau)$ (It's easy to see that $T_{(t,\tau)}\mathcal{T}^r=\R\times C^r(GL(\Lambda^{2,+}))$).   
	Note that the full differential $(D\mathcal{PVRVW})_{\Gamma}$ differs from the parameter fixed differential $d^1_{(A, B)}$ in \eqref{cg} by bounded linear terms in $(\delta t,\delta\tau)$. This fact, together with the estimate in Theorem~\ref{tj}, implies that $(D\mathcal{PVRVW})_{\Gamma}$ has closed range. Hence
	$$\mathrm{Ran}(D\mathcal{PVRVW})_{\Gamma} \not= \mathcal{C}'_{k-1}(P)$$
	if and only if there is a nonzero pair $(\phi,\psi)\in\mathcal{C}'_{k-1}(P)=L_{k-1}^2(X,\mathfrak{su}(2)_P\otimes\Lambda^1)\oplus L_{k-1}^2(X,\mathfrak{su}(2)_P\otimes\Lambda^{2,+})$ such that $\forall(\delta t,\delta\tau,a,b)\in\mathcal{T}^r\times T_{[A, B]}\mathcal{M}_{PVRVW}^*(t,\tau)$ we have
	\begin{equation}\label{yy}\begin{aligned}
			&\langle(D\mathcal{PVRVW})_{\Gamma}(\delta t,\delta\tau,a,b),(\phi,\psi)\rangle_{L^2}\\
			=&\langle(D\mathcal{PVRVW}_1)_{\Gamma}(\delta t,\delta\tau,a,b),\phi\rangle_{L^2}\\
			+&\langle(D\mathcal{PVRVW}_2)_{\Gamma}(\delta t,\delta\tau,a,b),\psi\rangle_{L^2}=0.
	\end{aligned}\end{equation}
	The above formula implies $(\phi,\psi)\in\mathrm{Ker}(D\mathcal{PVRVW})_{\Gamma}^*$ \big(where $(D\mathcal{PVRVW})_{\Gamma}^*$ is the $L^2$ adjoint operator of $(D\mathcal{PVRVW})_{\Gamma}$\big), then applying elliptic regularity for the Laplacian $(D\mathcal{PVRVW})_{\Gamma}(D\mathcal{PVRVW})_{\Gamma}^*$ with $C^{r-1}$ coefficients implies that $(\phi,\psi)$ is $C^{r+1}$(cf.  \cite[\S5]{FL}). And Aronszajn's theorem (cf.  \cite[Remark 3]{Ar}, \cite[Theorem 1.8]{Kaz}) implies that $(\phi,\psi)$ in $\mathrm{Ker}\left((D\mathcal{PVRVW})_{\Gamma}(D\mathcal{PVRVW})_{\Gamma}^*\right)$ has the unique continuation property (cf.  \cite[Lemma 5.9]{FL}). Therefore, to prove that $(\phi,\psi)\equiv 0$ on $X$, it is only necessary to prove that $(\phi,\psi)$ is zero on an open subset of $X$.

	\subsection{Establishment of the transversality}
	
	Before establishing the transversality, we prove the following lemma first:
	
	\begin{lemma}\label{xzz}Let $(X,g)$ be an oriented, compact and smooth Riemannian 4-manifold and $P\to X$ a principle $SU(2)$- or $SO(3)$-bundle on $X$.  For every perturbation parameter $(t,\tau)\in\mathcal{T}^r$, if $[A, B]\in\mathcal{B}_k^\diamond(P)$ is a solution to the perturbed VRVW equation
		$$\left\{
			\begin{aligned}
				&d_A^+d_A^*B+t\langle B,B\rangle^2B+\tau[[B\centerdot B]\centerdot[B\centerdot B]]=0\\
				&F_A^++\frac{1}{8}[B\centerdot B]=0,\\
			\end{aligned}\right.$$
		then $\rank(B)=3$ on $X$.
	\end{lemma}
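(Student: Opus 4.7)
The plan is to argue by contradiction: assume $\rank(B)\leq 2$ at every point of $X$ and show the only solution is $B\equiv 0$, contradicting $[A,B]\in\mathcal{B}_k^\diamond(P)$. The mechanism is algebraic: the cubic perturbation term $[[B\centerdot B]\centerdot[B\centerdot B]]$ vanishes identically under this rank hypothesis, so the first equation of the perturbed system reduces to a dissipative equation admitting no nonzero solution.

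First I would verify that $\centerdot$ is anti-symmetric on $2$-forms: straight from the definition, $\alpha\centerdot\beta=-\sum_i(\iota_{e_i}\alpha)\wedge(\iota_{e_i}\beta)=-\beta\centerdot\alpha$, since the wedge of two $1$-forms is anti-symmetric. At each point of $X$ where $\rank(B)\leq 2$, decompose $B=v_1\otimes w_1+v_2\otimes w_2$ with $v_i\in\mathfrak{su}(2)_P$ and $w_i\in\Lambda^{2,+}$; the diagonal contributions to $[B\centerdot B]$ vanish because $[v_i,v_i]=0$ and $w_i\centerdot w_i=0$, while the two cross terms combine through the two anti-symmetries to give $[B\centerdot B]=2[v_1,v_2]\otimes(w_1\centerdot w_2)$. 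This is a simple tensor $\xi\otimes\sigma\in\mathfrak{su}(2)_P\otimes\Lambda^{2,+}$, and a second application of the bracketed product immediately kills it: $[\xi\otimes\sigma\centerdot\xi\otimes\sigma]=[\xi,\xi]\otimes(\sigma\centerdot\sigma)=0$. Crucially, this vanishing occurs before the endomorphism $\tau$ is applied, so the conclusion is independent of the perturbation parameter.

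With the cubic term gone, the first equation reduces to $d_A^+ d_A^* B + t\langle B,B\rangle^2 B = 0$. Pairing against $B$ in $L^2(X)$ and integrating exactly as in the derivation immediately preceding \eqref{ad}, integration by parts on the quadratic term gives $\langle d_A^+ d_A^* B,B\rangle_{L^2}=\vert\vert d_A^* B\vert\vert_{L^2}^2$, and the zeroth-order term contributes $t\int_X|B|^6\,dV_g$, whence
\begin{equation*}
\vert\vert d_A^* B\vert\vert_{L^2}^2+t\int_X|B|^6\,dV_g=0.
\end{equation*}
Since $t>0$ and both summands are non-negative, $B\equiv 0$ on $X$, the required contradiction. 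The only step requiring genuine care is the algebraic vanishing above: the sign conventions and the interplay between the two anti-symmetries of $[\cdot,\cdot]$ and $\centerdot$ must be tracked carefully. This calculation also exposes the design principle behind the perturbation $\tau[[B\centerdot B]\centerdot[B\centerdot B]]$, namely that it is engineered to vanish precisely on the non-full-rank locus where the unperturbed theory fails transversality, leaving the definite-sign term $t\langle B,B\rangle^2 B$ to force $B\equiv 0$ in the integrated identity.
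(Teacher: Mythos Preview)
Your proof is correct and follows essentially the same route as the paper: assume $\rank(B)\le 2$ everywhere, deduce that the $\tau$-term vanishes identically, then pair the reduced first equation with $B$ to obtain $\Vert d_A^*B\Vert_{L^2}^2 + t\int_X |B|^6\,dV_g=0$, forcing $B\equiv 0$. The only difference is in how the algebraic vanishing $[[B\centerdot B]\centerdot[B\centerdot B]]=0$ is obtained: the paper simply cites the singular value decomposition in \cite[\S4.1.1]{Ma}, whereas you give a direct and self-contained argument via the rank-$\le 2$ decomposition $B=v_1\otimes w_1+v_2\otimes w_2$, observing that $[B\centerdot B]$ then has rank at most one and hence annihilates itself under the bracketed product. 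Your argument is slightly more elementary in that it avoids invoking SVD, while the paper's approach has the advantage of setting up the explicit coordinate expressions used later in the a~priori estimates; both yield the same vanishing before $\tau$ is applied, which is the crucial point.
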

	
	\begin{proof}
		Choose a solution $[A, B]\in\mathcal{B}_k^\diamond(P)$ to \eqref{yz} such that $\rank(B)\leq 2$ on $X$, then \cite[\S4.1.1]{Ma} $$[[B\centerdot B]\centerdot[B\centerdot B]]=0$$ on $X$.  Take inner product with $B$ in the first equation of \eqref{yz}, we have
		$$\langle d_A^+d_A^*B+t\langle B,B\rangle^2B,B\rangle_{L^2}=0,$$
		so
		$$\vert \vert d_A^*B\vert \vert ^2_{L^2}+t\vert \vert B\vert \vert _{L^2}^6=0.$$
		Then $t>0$  implies 
		$$\vert \vert d_A^*B\vert \vert ^2_{L^2}=0,~\vert \vert B\vert \vert _{L^2}^6=0$$
		on $X$ and hence $B\equiv 0$ on $X$, contradicts to the assumption $[A, B]\in\mathcal{B}_k^\diamond(P)$, so $B$ is rank 3 on $X$. 
	\end{proof}
	
	Now we prove that $(\phi,\psi)$ is zero on an open subset of $X$. For a perturbation parameter $(t,\tau)\in\mathcal{T}^r$, let $[A, B]\in\mathcal{B}_k^\diamond(P)$ be a solution to the corresponding perturbed VRVW equation \eqref{yz}. Lemma \ref{xzz} implies that there is an open subset $V\subset X$ such that $B$ is rank 3 on $V$. Then we can see that $[[B\centerdot B]\centerdot[B\centerdot B]]$ is also rank 3 on $V$\cite[\S4.1.1]{Ma}. In equations \eqref{yy}, set $(\delta t,a,b)=0$ we have
	$$\langle\delta\tau[[B\centerdot B]\centerdot[B\centerdot B]],\phi\rangle_{L^2}=0,~\forall\delta\tau\in C^r(\mathfrak{gl}(\Lambda^{2,+}))$$
	and \cite[Lemma 2.3]{Fe} implies $\phi\equiv 0$ on $V$. Then set $(\delta t,\delta\tau,a)=0$ in equations \eqref{yy} we have
	$$\langle b,[B\centerdot\psi]\rangle_{L^2}=\langle[b\centerdot B],\psi\rangle_{L^2}=0,~\forall b\in L^2_k(X,\mathfrak{su}(2)_P\otimes\Lambda^{2,+}),$$
	so on $V$, $[B\centerdot\psi]=0$ and Lemma \ref{xx} implies $\psi\equiv 0$ on $V$. Hence we have $(\phi,\psi)\equiv 0$ on $V$, so $(\phi,\psi)\equiv 0$ on $X$ by the unique continuation for the Laplacian $(D\mathcal{PVRVW})_{\Gamma}(D\mathcal{PVRVW})_{\Gamma}^*$ and we finish the establishment of the transversality.
	
	By the slice result (cf.  \cite[Proposition 2.8]{FL}), $T_{[A, B]}\mathcal{B}_k^\diamond(P)$ may be identified with $\mathrm{Ker}~d_{(A,B)}^{0,*}$. For $(a,b)\in\mathrm{Ker}~d_{(A,B)}^{0,*}$, we have
	$$\begin{aligned}
		(D\mathcal{PVRVW})_{(A,B)}(0,0,a,b)&=d_{(A,B)}^1(a,b)\\
		&=(d_{(A,B)}^{0,*}+d_{(A,B)}^1)(a,b),\end{aligned}$$
	and it's easy to see that the differential $(D\mathcal{PVRVW})_{(A,B)}\vert _{\{(0,0)\}\times T\mathcal{C}_k^\diamond(P))}$ is Fredholm, where ${\{0\}\times T\mathcal{C}_k^\diamond(P)}=T(\{(t,\tau)\}\times\mathcal{B}_k^\diamond(P))$. Thus $\mathcal{PVRVW}$ is a Fredholm section when restricted to the fixed-parameter fibers $\{(t,\tau)\}\times\mathcal{C}_k^\diamond(P)\subset\mathcal{T}^r\times\mathcal{C}_k^\diamond(P)$ where $(t,\tau)\in\mathcal{T}^r$, so the Sard-Smale theorem (cf. \cite[Proposition 4.12]{Fe}, \cite[Proposition 4.3.11]{D2}) implies that there is a first-category subset $\mathcal{T}^r_{fc}\subset\mathcal{T}^r$ such that the zero sets in $\mathcal{C}_k^\diamond(P)$ of $\mathcal{PVRVW}(t,\tau,\cdot)$ are regular (note the transversality) for all perturbations $(t,\tau)\in\mathcal{T}^r-\mathcal{T}^r_{fc}$.

	In summary, we have the following theorem:
	
	\begin{theorem}\label{ts}Let $(X,g)$ be a compact, oriented and smooth Riemannian 4-manifold, $P\to X$ a principle $G$-bundle. If $\kappa(P)\geq 3/8(1-b_1(X)+b_2^+(X))$, then there is a first-category subset $\mathcal{T}^r_{fc}\subset\mathcal{T}^r$ such that for all $(t,\tau)$ in $\mathcal{T}^r-\mathcal{T}^r_{fc}$ the following holds: The zero set of the section $\mathcal{PVRVW}(t,\tau,\cdot)$ in $\mathcal{C}_k^\diamond(P)$ is regular and the moduli space $\mathcal{M}_{PVRVW}^*(t,\tau):=\mathcal{M}_{PVRVW}(t,\tau)\cap\mathcal{B}_k^\diamond(P)=\big(\mathcal{PVRVW}(t,\tau,\cdot)^{-1}(0)/\mathcal{G}_{k+1}(P)\big)\cap\mathcal{B}_k^\diamond(P)$ is a smooth manifold of dimension $\ind(\mathcal{D}_{(A,B)})$, which is
		$$\ind(\mathcal{D}_{(A,B)})=8\kappa(P)-3(1-b_1(X)+b_2^+(X)).$$
	\end{theorem}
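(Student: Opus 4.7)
The plan is to apply the parametric Sard--Smale theorem to the gauge-equivariant section $\mathcal{PVRVW}: \mathcal{T}^r \times \mathcal{B}_k^\diamond(P) \to \overline{\mathcal{E}}_k$. For each fixed parameter $(t,\tau)$, the fiberwise differential combined with the slice condition $d^{0,*}_{(A,B)} = 0$ was identified in Section 3 with a Fredholm operator of index $8\kappa(P) - 3(1-b_1(X)+b_2^+(X))$; the hypothesis $\kappa(P) \geq 3/8(1-b_1(X)+b_2^+(X))$ ensures this expected dimension is non-negative. The main task therefore reduces to establishing surjectivity of the full linearization $(D\mathcal{PVRVW})_\Gamma$ at every $\Gamma = (t,\tau,A,B)$ with $\mathcal{PVRVW}(\Gamma) = 0$ and $[A,B]\in\mathcal{B}_k^\diamond(P)$. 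Once this is shown, Sard--Smale applied to the projection $\mathcal{PM}_k^\diamond(P) \to \mathcal{T}^r$ yields the first-category subset $\mathcal{T}^r_{fc}$, and the Banach implicit function theorem equips $\mathcal{M}_{PVRVW}^*(t,\tau)$ with the structure of a smooth manifold of the stated dimension.

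For the surjectivity I would first observe that $(D\mathcal{PVRVW})_\Gamma$ differs from its parameter-fixed part $d^1_{(A,B)}$ by bounded linear terms in $(\delta t,\delta\tau)$, so its range remains closed. Consequently, failure of surjectivity is equivalent to the existence of a nonzero pair $(\phi,\psi) \in L^2_{k-1}$ satisfying the annihilation identity \eqref{yy}. Elliptic regularity applied to the Laplacian $(D\mathcal{PVRVW})_\Gamma (D\mathcal{PVRVW})_\Gamma^*$ with $C^{r-1}$ coefficients promotes $(\phi,\psi)$ to $C^{r+1}$, and Aronszajn's unique continuation theorem then reduces everything to proving that $(\phi,\psi)$ vanishes on some open subset of $X$.

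The key step, and the one I expect to be the main obstacle, is producing such an open set. Invoking Lemma \ref{xzz}, one obtains an open $V \subset X$ on which $\rank(B) = 3$, whence $[[B\centerdot B]\centerdot[B\centerdot B]]$ also has rank $3$ on $V$. Setting $(\delta t, a, b) = 0$ in \eqref{yy} leaves $\langle \delta\tau[[B\centerdot B]\centerdot[B\centerdot B]], \phi\rangle_{L^2} = 0$ for every $\delta\tau \in C^r(\mathfrak{gl}(\Lambda^{2,+}))$; by \cite[Lemma 2.3]{Fe} this forces $\phi \equiv 0$ on $V$. With $\phi$ killed, varying $b$ alone and using the pairing identity $\langle[b\centerdot B],\psi\rangle_{L^2} = \langle b,[B\centerdot \psi]\rangle_{L^2}$ gives $[B\centerdot \psi] = 0$ on $V$, and Lemma \ref{xx} then yields $\psi \equiv 0$ on $V$. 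Unique continuation propagates this vanishing to all of $X$, completing the surjectivity argument.

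The real conceptual hurdle is Lemma \ref{xzz}: without it one cannot guarantee a region of full pointwise rank for $B$, and the $\tau$-perturbation would fail to cut out $\phi$. It is precisely the $t\langle B,B\rangle^2 B$ term in the perturbation that rules out the low-rank locus and makes the $\tau$-variation effective; in this sense the two perturbation terms play complementary roles, and the choice of $\mathcal{T}^r = \mathbb{R}^+ \times C^r(GL(\Lambda^{2,+}))$ is dictated exactly by this interplay.
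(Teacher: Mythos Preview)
Your proposal is correct and follows essentially the same route as the paper's own argument in Section~\ref{trans}: closed range from the Fredholm fiberwise operator, reduction via \eqref{yy} and elliptic regularity/Aronszajn to vanishing on an open set, then Lemma~\ref{xzz} to secure a rank-$3$ region $V$, the $\delta\tau$-variation plus \cite[Lemma~2.3]{Fe} to kill $\phi$ on $V$, the $b$-variation plus Lemma~\ref{xx} to kill $\psi$ on $V$, and finally Sard--Smale. The only minor point worth making explicit is that when you ``vary $b$ alone'' you should take $b$ compactly supported in $V$ so that the $\phi$-pairing with $(D\mathcal{PVRVW}_1)_\Gamma(0,0,0,b)$ genuinely drops out (since $\phi$ is only known to vanish on $V$ at that stage); the paper glosses over this in the same way.
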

	
	\section{A priori bound on the solutions}
	
	We show that there is \emph{a priori} boundness for the solutions $[A,B]$ of \eqref{yz}, which is an indispensable step in constructing the Ulhenbeck closure of the perturbed VRVW moduli spaces.
	
	\begin{theorem}\label{ulh}Let $(X,g)$ be a compact, oriented and smooth Riemannian 4-manifold, $P\to X$ a principle $G$-bundle. If $\kappa(P)\geq 3/8(1-b_1(X)+b_2^+(X))$, then for any generic parameter $(t,\tau)\in\mathcal{T}^r-\mathcal{T}^r_{fc}$ there are constants $C_{t,\tau,X}$ and $K_{t,\tau,X}$ depend only on $t,\tau$ and $X$ such that for any solution $[A,B]$ of \eqref{yz}, we have $\vert\vert B\vert\vert_{L^\infty} \leq C_{t,\tau,X}$  and $\vert\vert d_AB\vert\vert_{L^2}\leq K_{t,\tau,X}$. Moreover, for fixed $\tau$, $\lim_{t\to\infty}K_{t,\tau,X}=\lim_{t\to\infty}C_{t,\tau,X}=0$.
	\end{theorem}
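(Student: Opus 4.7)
My plan is a Bochner--Weitzenb\"ock maximum-principle argument applied to the scalar function $|B|^2$. For a twisted self-dual two-form $B\in\Omega^{2,+}(X,\mathfrak{g}_P)$, the Weitzenb\"ock identity expresses the elliptic operator $d_A^+d_A^*B$ in the form
\[
 2\,d_A^+d_A^*B=\nabla_A^*\nabla_AB+\mathcal{R}(g)B+c_1\{F_A^+,B\},
\]
where $\mathcal{R}(g)$ is a zeroth-order geometric endomorphism bounded pointwise by the scalar and Weyl curvatures of $(X,g)$ and $\{F_A^+,B\}$ is the universal zeroth-order pairing. Pairing pointwise with $B$ and applying the Bochner identity $\langle\nabla_A^*\nabla_AB,B\rangle=\tfrac{1}{2}\Delta|B|^2+|\nabla_AB|^2$, I substitute the first equation of \eqref{yz} to eliminate $d_A^+d_A^*B$ and the second equation to replace $F_A^+$ by $-\tfrac{1}{8}[B\centerdot B]$. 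With the universal pointwise estimates $|\langle\{F_A^+,B\},B\rangle|\le c|B|^4$ and $|\langle\tau[[B\centerdot B]\centerdot[B\centerdot B]],B\rangle|\le C_\tau|B|^6$, this yields a scalar differential inequality of the form
\[
 \tfrac{1}{2}\Delta|B|^2+|\nabla_AB|^2+2t|B|^6\le C_\tau|B|^6+C_X|B|^4+C_X|B|^2,
\]
with $C_X$ depending only on $(X,g)$.

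Evaluated at a point $x_0\in X$ where $M:=\|B\|_{L^\infty}^2$ is attained, the inequality $\Delta|B|^2(x_0)\ge 0$ reduces the above to the polynomial bound $(2t-C_\tau)M^3\le C_XM^2+C_XM$, which I solve to obtain the $L^\infty$ bound $\|B\|_{L^\infty}\le C_{t,\tau,X}$; the dominant balance $2tM^3\sim C_XM$ gives $M=O(t^{-1/2})$ as $t\to\infty$ with $\tau$ fixed, hence $C_{t,\tau,X}=O(t^{-1/4})\to 0$. For the $L^2$ bound on $d_AB$, I take the $L^2$ inner product of the first equation of \eqref{yz} with $B$ and integrate by parts over $X$ to obtain
\[
 \|d_A^*B\|_{L^2}^2+t\,\|B\|_{L^6}^6=-\int_X\langle\tau[[B\centerdot B]\centerdot[B\centerdot B]],B\rangle\,dV,
\]
whose right-hand side is controlled by $C_\tau\,\mathrm{vol}(X)\,\|B\|_{L^\infty}^6$. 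Integrating the Bochner identity over $X$ then expresses $\|\nabla_AB\|_{L^2}^2$ as $2\|d_A^*B\|_{L^2}^2$ plus integrals of $\mathcal{R}(g)$ and $F_A^+$ paired with $B^2$, each controlled by the $L^\infty$ bound; since $|d_AB|\le c|\nabla_AB|$ pointwise, I conclude $\|d_AB\|_{L^2}\le K_{t,\tau,X}$. Both constants tend to zero as $t\to\infty$ because every right-hand side term is at least quadratic in the vanishing $\|B\|_{L^\infty}$.

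The main obstacle is the perturbation term $\tau[[B\centerdot B]\centerdot[B\centerdot B]]$: its pointwise pairing with $B$ has no a priori sign, so only the absolute bound $|\langle\cdots,B\rangle|\le C_\tau|B|^6$ is universally available and the maximum-principle closes cleanly only when $2t>C_\tau$. For smaller values of $t$ one can complement the maximum principle with the integrated identity above, which upon the same absolute estimate gives $(t-C_\tau)\|B\|_{L^6}^6+\|d_A^*B\|_{L^2}^2\le 0$ and forces $B\equiv 0$ whenever $t$ exceeds a $\tau$-dependent threshold; the remaining bounded parameter range is then handled by the maximum-principle estimate to furnish the finite (possibly large) $L^\infty$ bound asserted in the theorem.
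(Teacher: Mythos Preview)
Your overall strategy---Weitzenb\"ock identity for $d_A^+d_A^*$ on twisted self-dual two-forms, pair pointwise with $B$, substitute both equations of \eqref{yz}, apply the maximum principle to $|B|^2$---is exactly the paper's. The gap is a degree-counting error in your estimate on the perturbation term: $[[B\centerdot B]\centerdot[B\centerdot B]]$ is homogeneous of degree~$4$ in $B$ (each $[B\centerdot B]$ is quadratic) and $\tau$ is a zeroth-order endomorphism of $\Lambda^{2,+}$, so $\langle\tau[[B\centerdot B]\centerdot[B\centerdot B]],B\rangle$ is homogeneous of degree~$5$, not~$6$. The paper makes this explicit by diagonalising $B$ via the singular-value decomposition on $\mathfrak{su}(2)\otimes\Lambda^{2,+}$ and computing
\[
-\bigl\langle B,\tau[[B\centerdot B]\centerdot[B\centerdot B]]\bigr\rangle
=1024\,B_1B_2B_3\bigl(\tau_{11}B_1^{2}+\tau_{22}B_2^{2}+\tau_{33}B_3^{2}\bigr)\le\lambda_\tau\,|B|^{5}.
\]

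This single correction dissolves what you call the ``main obstacle''. With $\lambda_\tau|B|^{5}$ in place of $C_\tau|B|^{6}$, the scalar inequality at a maximum point reads $-4t\,|B(x_0)|^{4}+4\lambda_\tau\,|B(x_0)|^{3}+\lambda_X\ge 0$, and the leading $-4t|B|^{4}$ term dominates the degree-$3$ term for large $|B|$ for \emph{every} $t>0$; no threshold on $t$ is needed and the bound $C_{t,\tau,X}$ exists for all $t>0$. By contrast, your attempted workaround for small $t$ is circular: the integrated identity with the crude $|B|^{6}$ bound again requires $t>C_\tau$ to force $B\equiv 0$, and on the range $2t\le C_\tau$ your maximum-principle inequality $(2t-C_\tau)M^{3}\le C_XM^{2}+C_XM$ imposes no constraint whatsoever on $M$, so the ``remaining bounded parameter range'' is not in fact handled.

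Two smaller points. First, after substituting $F_A^{+}=-\tfrac{1}{8}[B\centerdot B]$ the curvature pairing $2\langle B,[F_A^{+}\centerdot B]\rangle=-\tfrac{1}{4}|[B\centerdot B]|^{2}$ has a favourable sign and can simply be dropped, so no $|B|^{4}$ term is needed on the right. Second, the $L^{2}$ bound on $d_AB$ is simpler than your route through $\|\nabla_AB\|_{L^{2}}$: for a self-dual two-form one has $d_A^{*}B=-*d_AB$, hence $|d_AB|=|d_A^{*}B|$ pointwise, and the bound $\|d_A^{*}B\|_{L^{2}}^{2}\le\lambda_\tau\,C_{t,\tau,X}^{5}\,\mathrm{vol}(X)$ obtained from the integrated first equation (with the corrected degree-$5$ estimate) is already the desired $K_{t,\tau,X}$.
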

	\begin{proof}
		Fix a generic parameter $(t,\tau)\in\mathcal{T}^r-\mathcal{T}^r_{fc}$, for $\forall x\in X$, in a neighborhood $U$ of $x$ according to the singular value decomposition for $\mathfrak{su}(2)\otimes\Lambda^{2,+}\mathbb{R}^4$ in  \cite[\S 4.1.1]{Ma}, there exist oriented orthonormal basis $\{e^1,e^2,e^3,e^4\}$ for $\mathbb{R}^4$ and a basis $\{\eta_1,\eta_2,\eta_3\}$ for $\mathfrak{su}(2)$   such that $[\eta_1,\eta_2]=2\eta_3$ and cyclic permutations, $B=B_{1}\eta_1\otimes(e^1\wedge e^2+e^3\wedge e^4)+B_{2}\eta_2\otimes(e^1\wedge e^3+e^4\wedge e^2)+B_{3}\eta_3\otimes(e^1\wedge e^4+e^2\wedge e^3)$, 
		where $B_{1},B_{2},B_{3}\in L_k^2(U)$. For simplicity denote $\sigma^1:=e^1\wedge e^2+e^3\wedge e^4$, $\sigma^1:=e^1\wedge e^3+e^4\wedge e^2$ and $\sigma^3:=e^1\wedge e^4+e^2\wedge e^3$, then it's not hard to see that $\sigma^1\centerdot\sigma^2=-2\sigma^3$ and cyclic permutations. Then we have
		\begin{align*}
			B=&B_1\eta_1\sigma^1+B_2\eta_2\sigma^2+B_3\eta_3\sigma^3,\\
			[B\centerdot B]=&-8B_2B_3\eta_1\sigma^1-8B_3B_1\eta_2\sigma^2-8B_1B_2\eta_3\sigma^3,\\
			[[B\centerdot B]\centerdot[B\centerdot B]]=&-512B_1^2B_2B_3\eta_1\sigma^1-512B_1B_2^2B_3\eta_2\sigma^2-512B_1B_2B_3^2\eta_3\sigma^3,\\
			\vert B\vert ^2=&\langle B,B\rangle=2(B_1^2+B_2^2+B_3^2).
		\end{align*}
		Let $\tau=\begin{pmatrix}
			\tau_{11}&\tau_{12}&\tau_{13}\\
			\tau_{21}&\tau_{22}&\tau_{23}\\
			\tau_{31}&\tau_{32}&\tau_{33}\\
		\end{pmatrix}$ be the representation under the basis $\{\sigma^1,\sigma^2,\sigma^3\}$, then	
		\begin{align*}
			\tau[[B\centerdot B]\centerdot[B\centerdot B]]
			=&-512B_1^2B_2B_3\eta_1(\tau_{11}\sigma^1+\tau_{12}\sigma^2+\tau_{13}\sigma^3)\\
			&-512B_1B_2^2B_3\eta_2(\tau_{21}\sigma^1+\tau_{22}\sigma^2+\tau_{23}\sigma^3)\\
			&-512B_1B_2B_3^2\eta_3(\tau_{31}\sigma^1+\tau_{32}\sigma^2+\tau_{33}\sigma^3),\\
			-\langle B\cdot\tau[[B\centerdot B]\centerdot[B\centerdot B]]\rangle=&1024B_1B_2B_3(\tau_{11}B_1^2+\tau_{22}B_2^2+\tau_{33}B_3^2)\\
			\leq&1024(\frac{B_1^2+B_2^2+B_3^2}{3})^{\frac{3}{2}}\frac{\max\{\vert \tau_{11}\vert ,\vert \tau_{22}\vert ,\vert \tau_{33}\vert \}}{2}2(B_1^2+B_2^2+B_3^2)\\
			=&\lambda_\tau\vert B\vert ^5
		\end{align*}
		for some constant $\lambda_\tau$ depends only on $\tau$.

		Denote by $\Delta_g$ the Laplace-Beltrami opertor on $(X,g)$, for any solution $[A, B]\in\mathcal{B}_k^\diamond(P)$ to the equation \eqref{yz}, applying \cite[(B.19)]{Ma} we have
		\begin{align*}
			&\Delta_g\vert B\vert ^2\\
			\leq&\Delta_g\vert B\vert ^2+2\vert \nabla_AB\vert ^2\\
			=&2\langle B\cdot\nabla_A^*\nabla_AB\rangle\\
			=&4\langle B\cdot d_A^+d_A^*B\rangle-\frac{2}{3}(s\vert B\vert ^2-6W^+\cdot\langle B\odot B\rangle)+2\langle B\cdot[F_A^+\centerdot B]\rangle\\
			\leq&-4t\vert B\vert ^6-4\langle B\cdot\tau[[B\centerdot B]\centerdot[B\centerdot B]]\rangle+\lambda_X\vert B\vert ^2+2\langle F_A^+\cdot[B\centerdot B]\rangle\\
			\leq&-4t\vert B\vert ^6+4\lambda_\tau\vert B\vert ^5+\lambda_X\vert B\vert ^2-\frac{1}{4}\vert [B\centerdot B]\vert ^2\\
			\leq&-4t\vert B\vert ^6+4\lambda_\tau\vert B\vert ^5+\lambda_X\vert B\vert ^2
		\end{align*}
		for another constant $\lambda_X$ relies only on the topological imformation of $X$. Since $X$ is compact, $\vert B\vert ^2$ attains its maximum at a point $x_0\in X$. At $x_0$,
		$$\Delta_g\vert B\vert ^2=-\sum_{i}\partial_i\partial_i\vert B\vert ^2\geq 0,$$
		hence we have
		$$-4t\vert B(x_0)\vert ^6+4\lambda_\tau\vert B(x_0)\vert ^5+\lambda_X\vert B(x_0)\vert ^2\geq 0$$
		which is equivalent to
		$$-4t\vert B(x_0)\vert ^4+4\lambda_\tau\vert B(x_0)\vert ^3+\lambda_X\geq 0.$$
		the above inequality and $t>0$ imply $\vert B(x_0)\vert \leq C_{t,\tau,X}$ for some constant $C_{t,\tau,X}$ dependents only on $t,\tau$ and $X$ and so $\vert\vert B\vert\vert_{L^\infty} \leq C_{t,\tau,X}$.
		
		When $t>2\sqrt[3]{4\lambda_\tau^4/\lambda_X}$ and if $\vert B(x)\vert>\sqrt[4]{\lambda_X/(2t)}>\lambda_\tau/t$ for some $x\in X$, then
		\begin{align*}
			&4t\vert B(x)\vert ^4-4\lambda_\tau\vert B(x)\vert ^3-\lambda_X\\
			=&4\vert B(x)\vert^3(t\vert B\vert-\lambda_\tau)-\lambda_X\\
			>&4(\sqrt[4]{\lambda_X/(2t)})^3(t\sqrt[4]{\lambda_X/(2t)}-\lambda_\tau)-\lambda_X\\
			=&\lambda_X-4\lambda_\tau(\sqrt[4]{\lambda_X/(2t)})^3>0,
		\end{align*}
		so we must have $C_{t,\tau,X}\leq\sqrt[4]{\lambda_X/(2t)}$, hence $\lim_{t\to\infty}C_{t,\tau,X}=0$.
		
		Take inner product with $B$ in the first equation of \eqref{yz}, we have
		$$\vert\vert d_A^*B\vert\vert_{L^2}^2+t\vert\vert B\vert\vert_{L^2}^6+\langle\tau[[B\centerdot B]\centerdot[B\centerdot B]]\cdot B\rangle_{L^2}=0,$$
		hence
		\begin{align*}
			\vert\vert d_A^*B\vert\vert_{L^2}^2&=-t\vert\vert B\vert\vert_{L^2}^6-\langle\tau[[B\centerdot B]\centerdot[B\centerdot B]]\cdot B\rangle_{L^2}\\
			&\leq-\langle\tau[[B\centerdot B]\centerdot[B\centerdot B]]\cdot B\rangle_{L^2}\\
			&\leq\int_X\lambda_\tau\vert B\vert ^5d\mathrm{vol}\\
			&\leq\lambda_\tau C_{t,\tau,X}^5\mathrm{vol}(X).
		\end{align*}
		Denote $$K_{t,\tau,X}:=\sqrt{\lambda_\tau C_{t,\tau,X}^5\mathrm{vol}(X)},$$we have $\vert\vert d_AB\vert\vert_{L^2}=\vert\vert d_A^*B\vert\vert_{L^2}\leq K_{t,\tau,X}$, and $\lim_{t\to\infty}C_{t,\tau,X}=0$ implies $\lim_{t\to\infty}K_{t,\tau,X}=0$.
	\end{proof}
	
	\section{Ulhenbeck closure of the moduli spaces}
	
	Given the a priori boundness result Theorem~\ref{ulh}, by applying the methods of the proof of \cite[Theorem 4.10]{FL}, we have the following removable singularities theorem for perturbed VRVW equations.
	
	\begin{theorem}\label{rm}
		{\bf (cf.  \cite[Theorem 4.10]{FL} \cite[Theorem 3.4.1]{Ma}).} Let $B(x,r)\subset X$ be a geodesic ball and  $P \to B(x,r)\backslash\{x\}$ be a principal $SU(2)$ or $SO(3)$-bundle. Suppose $(A,B)$ is a $C^\infty$ solution to the perturbed VRVW equations \eqref{yz} for $P$ over the punctured ball $B(x,r)\backslash\{x\}$ with
		$$\int_{B(x,r)\backslash\{x\}}(\vert F_A\vert^2+\vert d_AB\vert^2+\vert B\vert^4)d\mathrm{vol}\leq\mathrm{const.}$$
		Then there is a principal bundle $\tilde{P}\to B(x,r)$, a $C^\infty$ solution $(\tilde{A},\tilde{B})$ to the perturbed VRVW equations \eqref{yz} for $\tilde{P}$ over $B(x,r)$, and a $C^\infty$ bundle isomorphism $u:P\vert_{B(x,r)\backslash\{x\}}\to\tilde{P}\vert_{B(x,r)\backslash\{x\}}$ such that
		$$u^*(\tilde{A},\tilde{B})=(A,B)\text{ over }B(x,r)\backslash\{x\}.$$		
	\end{theorem}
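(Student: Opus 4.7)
The plan is to follow the strategy of the proof of \cite[Theorem 4.10]{FL} (which extends Uhlenbeck's classical removable singularities theorem, cf.\ \cite{D2}, to gauge-theoretic equations coupled with auxiliary fields) and adapt it to accommodate the fourth-order term $d_A^+d_A^*B$ together with the polynomial perturbation terms $t\langle B,B\rangle^2 B$ and $\tau[[B\centerdot B]\centerdot[B\centerdot B]]$ in the first equation of \eqref{yz}. The finite-energy hypothesis $\int_{B(x,r)\setminus\{x\}}(|F_A|^2+|d_AB|^2+|B|^4)\,d\mathrm{vol}<\infty$ plays exactly the role that the $(F_A,\Phi)$-energy plays in the monopole case.

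First I would decompose the punctured ball into dyadic annuli $A_n:=B(x,2^{-n+1}r)\setminus B(x,2^{-n}r)$ and rescale each $A_n$ to a fixed model annulus in $\R^4$. The finite-energy hypothesis forces the rescaled curvature energy $\int_{A_n}|F_A|^2\,d\mathrm{vol}$ to tend to zero as $n\to\infty$, and similarly for $\int_{A_n}|d_AB|^2\,d\mathrm{vol}$ and $\int_{A_n}|B|^4\,d\mathrm{vol}$. For $n$ large enough these fall below the threshold in Uhlenbeck's gauge-fixing theorem, yielding gauges $u_n$ on each $A_n$ in which the transformed connection is in Coulomb gauge with small $L^2_1$ norm. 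I would then patch the $u_n$ over the overlaps by the standard iterative construction of Donaldson-Kronheimer (cf.\ \cite[Ch.~4]{D2}) to obtain a global Coulomb gauge $\tilde A$ on $B(x,r)\setminus\{x\}$ with controlled decay.

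In this gauge I would derive pointwise decay of $|B|$ and $|F_A|$ near $x$. The key estimate is the Bochner-type inequality used in the proof of Theorem~\ref{ulh}, namely
\[
\Delta_g|B|^2\leq -4t|B|^6+4\lambda_\tau|B|^5+\lambda_X|B|^2,
\]
which I would feed into a Moser iteration on small balls around $x$ to extract an $L^\infty$ bound on $B$ near the singularity; critically, one uses the sign of the leading $-4t|B|^6$ term to absorb the higher-order perturbations and keep the iteration subcritical. Once $|B|$ is bounded, the second equation gives $F_A^+=-\tfrac18[B\centerdot B]\in L^\infty$, and the Bianchi identity $d_AF_A=0$ in Coulomb gauge, coupled with elliptic regularity for $d_A^*+d_A^+$, upgrades $F_A$ to pointwise bounds. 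In parallel, the first equation of \eqref{yz} becomes a fourth-order elliptic equation $d_A^+d_A^*B=-t\langle B,B\rangle^2 B-\tau[[B\centerdot B]\centerdot[B\centerdot B]]$ with $L^\infty$ right-hand side; elliptic regularity for the self-adjoint Fredholm operator $d_A^+d_A^*$ (as already exploited in Section~3) lifts $B$ to $L^2_2$ and then to Hölder classes by Sobolev embedding.

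Finally I would show that $\tilde A$ extends as a continuous connection across $x$ on an extended bundle $\tilde P\to B(x,r)$ obtained by gluing, that $B$ extends continuously, and that the bundle isomorphism $u:P|_{B(x,r)\setminus\{x\}}\to\tilde P|_{B(x,r)\setminus\{x\}}$ is provided by the patching; bootstrap regularity for the coupled system \eqref{yz} (iterating Theorem~\ref{tj}) then upgrades $(\tilde A,\tilde B)$ all the way to $C^\infty$ over the full ball. I expect the main obstacle to be the Moser iteration step for $B$: because the perturbation terms are quintic in $B$, a naive iteration would degrade the gain at each step, and one must carefully exploit the coercive $-4t|B|^6$ contribution (available only because $t>0$) to close the scheme locally in the same spirit as in Theorem~\ref{ulh}. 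Once this local $L^\infty$ control is in hand, the remaining steps are a mechanical adaptation of the Feehan-Leness and Mares arguments.
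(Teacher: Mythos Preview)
Your proposal is correct and aligned with the paper: the paper does not give a detailed proof of this theorem but simply states that it follows ``by applying the methods of the proof of \cite[Theorem 4.10]{FL}'' together with the a~priori bounds of Theorem~\ref{ulh}, which is exactly the strategy you outline in greater detail. One minor slip: $d_A^+d_A^*:\Omega^{2,+}\to\Omega^{2,+}$ is a \emph{second}-order (not fourth-order) elliptic operator on self-dual two-forms, but this does not affect the structure of your argument.
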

	Later, by applying the Chern-Weil identity we will show that 
	$$\vert\vert F_A\vert\vert^2_{L^2}:=\int_X \vert F_A\vert^2d\mathrm{vol}<\infty,$$
	then combine with Theorem~\ref{ulh} we have
	$$\int_{X\backslash\{x_1,\ldots,x_m\}}(\vert F_A\vert^2+\vert d_AB\vert^2+\vert B\vert^4)d\mathrm{vol}<\infty$$
	for $\forall m\in\N^*$ and $\{x_1,\ldots,x_m\}\subset X$. The above bound is crucial for constructing the Ulhenbeck closure.
	
	Now, we are in place constructing the Ulhenbeck compactification of the  perturbed VRVW moduli space $\mathcal{M}_{PVRVW,\kappa(P)}(t,\tau)$. For $P_m\to X$ a principle $G$-bundle with characteristic number $\kappa(P)=m$, let $\mathcal{T}^r_{m,fc}$ denotes the first-category set defined in Theorem \ref{ts}, and then define
	$${\mathcal{T}^r}':=\bigcup_{k=-\infty}^\infty\mathcal{T}^r_{m,fc}.$$
	${\mathcal{T}^r}'$ is a countable union of first-category sets, hence it's also a set of first-category. Then for any $(t,\tau)\in\mathcal{T}^r-{\mathcal{T}^r}'$ and $P_m\to X$, we define the set of \emph{ideal solutions of perturbed VRVW equation} $\mathcal{IM}_{PVRVW,m}(t,\tau)$ to be
	$$\mathcal{IM}_{PVRVW,m}(t,\tau):=\bigcup_{l=0}^\infty\mathcal{M}_{PVRVW,m-l}(t,\tau)\times\mathrm{Sym}^l(X).$$
	A sequence $\{[A_i,B_i,\textbf{x}_i]\}\in\mathcal{IM}_{PVRVW,m}(t,\tau)$ \emph{converges to} $[A_0,B_0,\textbf{x}_0]\in\mathcal{M}_{PVRVW,m-l_0}(t,\tau)\times\mathrm{Sym}^{l_0}(X)$ if for some (or equivalently any) choice of smooth representatives $(A_i,B_i)\in\mathcal{M}_{PVRVW,m-l_i}(t,\tau)$ the following hold:
	
	$\bullet$ There is a sequence of smooth bundle isomorphisms $g_i:P_{m-l_i}\vert _{X\setminus\textbf{x}_0}\to P_{m-l_0}\vert _{X\setminus\textbf{x}_0}$ such that
	$g_i(A_i, B_i)$ converges in $C^\infty$ to $(A_0, B_0)$ over $X\setminus\textbf{x}_0$.
	
	$\bullet$ The sequence $\vert F_{A_i}\vert ^2+8\pi^2\sum_{x\in\textbf{x}_i}\delta_x$ converges in the weak-* topology on measures to $\vert F_{A_0}\vert ^2+8\pi^2\sum_{x\in\textbf{x}_0}\delta_x$.
	
	If $[A,B]\in\mathcal{M}_{PVRVW,m-l}(t,\tau)$, $l\in\mathbb{N}$, from the proof of Theorem \ref{ulh} we can see that locally
	\begin{align*}
		B=&B_1\eta_1\sigma^1+B_2\eta_2\sigma^2+B_3\eta_3\sigma^3,\\
		[B\centerdot B]=&-8B_2B_3\eta_1\sigma^1-8B_3B_1\eta_2\sigma^2-8B_1B_2\eta_3\sigma^3,
	\end{align*}
	hence	
	\begin{align*}
		\vert B\vert ^2=&\langle B,B\rangle=2(B_1^2+B_2^2+B_3^2),\\
		\vert [B\centerdot B]\vert ^2=&\langle [B\centerdot B],[B\centerdot B]\rangle=128(B_2^2B_3^2+B_3^2B_1^2+B_1^2B_2^2).
	\end{align*}
	The inequality $(B_1^2+B_2^2+B_3^2)^2\geq 3(B_2^2B_3^2+B_3^2B_1^2+B_1^2B_2^2)$ implies that pointwisely we have
	$$\left\vert \frac{1}{8}[B\centerdot B]\right\vert ^2\leq\frac{1}{6}\vert B\vert ^4,$$
	so
	$$\left\vert \left\vert \frac{1}{8}[B\centerdot B]\right\vert \right\vert ^2_{L^2}=\int_{X}\left\vert \frac{1}{8}[B\centerdot B]\right\vert ^2d\mathrm{vol}\leq\int_X\frac{1}{6}\vert B\vert ^4d\mathrm{vol}\leq\frac{\mathrm{vol}(X)}{6}C_{t,\tau,X}^4.$$
	Then the Chern-Weil identity implies
	\begin{align*}
		0\leq \vert \vert F_A^-\vert \vert ^2_{L^2}&=8\pi^2 (m-l)+\vert \vert F_A^+\vert \vert ^2_{L^2}\\
		&=8\pi^2 (m-l)+\left\vert \left\vert \frac{1}{8}[B\centerdot B]\right\vert \right\vert ^2_{L^2}\\
		&\leq  8\pi^2 (m-l)+\frac{\mathrm{vol}(X)}{6}C_{t,\tau,X}^4,
	\end{align*}
	so we must have
	$$l\leq m+\frac{\mathrm{vol}(X)C_{t,\tau,X}^4}{48\pi^2}$$
	which means $\mathcal{IM}_{PVRVW,m}(t,\tau)$ is in fact a finite union. Since $\vert\vert B\vert\vert_{L^\infty} \leq C_{t,\tau,X}$, $\vert \vert F_A^+\vert \vert _{L^\infty}=\vert \vert \frac{1}{8}[B\centerdot B]\vert \vert ^2_{L^\infty}\leq\frac{1}{6}\vert \vert B\vert \vert ^4_{L^\infty}$ is also bounded on $X$. Moreover
	\begin{align*}
		\vert \vert F_A\vert \vert ^2_{L^2}&=\vert \vert F_A^+\vert \vert ^2_{L^2}+\vert \vert F_A^-\vert \vert ^2_{L^2}\\
		&=8\pi^2(m-l)+2\vert \vert F_A^+\vert \vert ^2_{L^2}\\
		&\leq 8\pi^2(m-l)+\frac{\mathrm{vol}(X)}{3}C_{t,\tau,X}^4=\mathrm{const.}
	\end{align*} 
	Then by applying a similar argument as in \cite{FL}, we have the following theorem.
	\begin{theorem}  \label{final}
		{\bf (cf.  \cite[Theorem 4.20]{FL} \cite[Theorem 3.5.2]{Ma}).} Let $X$ be a closed, oriented, smooth Riemannian 4-manifold with Riemannian metric $g$, and $P\to X$ a principal $G=SU(2)$- or $SO(3)$-bundle with characteristic number 
		$$\kappa\geq-\frac{\mathrm{vol}(X)C_{t,\tau,X}^4}{48\pi^2},$$ then the Ulhenbeck closure $\overline{\mathcal{M}}_{PVRVW,\kappa}(t,\tau)\subset\mathcal{IM}_{PVRVW,\kappa}(t,\tau)$   is  sequentially compact.
	\end{theorem}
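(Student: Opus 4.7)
The plan is to show that any sequence $\{[A_i,B_i]\}_{i=1}^\infty \subset \mathcal{M}_{PVRVW,\kappa}(t,\tau)$ has a subsequence converging to a triple $[A_\infty,B_\infty,\mathbf{x}_0] \in \mathcal{IM}_{PVRVW,\kappa}(t,\tau)$ in the sense of the two bullets given just before Theorem~\ref{final}. The hypothesis $\kappa \geq -\mathrm{vol}(X)C_{t,\tau,X}^4/(48\pi^2)$, combined with the finiteness argument preceding the theorem (bounding the number of bubbles $l$ by $\kappa + \mathrm{vol}(X)C_{t,\tau,X}^4/(48\pi^2)$), guarantees that only finitely many bundles $P_{\kappa-l}$ appear, so after passing to a subsequence we may assume all $[A_i,B_i]$ live on a single bundle.

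First I would assemble the uniform a priori bounds. Theorem~\ref{ulh} gives $\|B_i\|_{L^\infty} \leq C_{t,\tau,X}$ and $\|d_{A_i}B_i\|_{L^2} \leq K_{t,\tau,X}$, and the Chern-Weil computation already displayed before Theorem~\ref{final} yields $\|F_{A_i}\|_{L^2}^2 \leq 8\pi^2\kappa + \frac{\mathrm{vol}(X)}{3}C_{t,\tau,X}^4$, a bound independent of $i$. Invoking the standard Uhlenbeck weak compactness theorem for connections with $L^2$-bounded curvature (exactly as in \cite[\S 4]{FL}, \cite[\S 3.5]{Ma}), I can extract a subsequence, a finite bubbling set $\mathbf{x}_0 = \{x_1,\ldots,x_{l_0}\} \subset X$ (counted with multiplicity), a limit bundle $P_\infty \to X$ of characteristic number $\kappa - l_0$, and smooth bundle isomorphisms $g_i : P|_{X\setminus\mathbf{x}_0} \to P_\infty|_{X\setminus\mathbf{x}_0}$ such that $g_i \cdot A_i \to A_\infty$ in $C^\infty_{\mathrm{loc}}(X\setminus\mathbf{x}_0)$ and $|F_{A_i}|^2 \to |F_{A_\infty}|^2 + 8\pi^2\sum_{x\in\mathbf{x}_0}\delta_x$ weakly as measures.

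Next I would carry the $B$-component through. Because $\|g_i\cdot B_i\|_{L^\infty} + \|d_{g_i\cdot A_i}(g_i\cdot B_i)\|_{L^2}$ is uniformly bounded and $g_i \cdot A_i$ converges smoothly locally, a subsequence of $g_i\cdot B_i$ converges weakly in $L^2_{1,\mathrm{loc}}(X\setminus\mathbf{x}_0)$ to some $B_\infty$. To upgrade this to $C^\infty_{\mathrm{loc}}$ convergence I would place each $(g_i\cdot A_i, g_i\cdot B_i)$ in a Coulomb slice around $(A_\infty, B_\infty)$ on precompact open subsets $K \Subset X\setminus\mathbf{x}_0$ and apply the global elliptic regularity of Theorem~\ref{tj} iteratively: since the inhomogeneous data $(w,u,v) = 0$ and the $L^4$-smallness hypothesis is met on small balls once $i$ is large, the estimate bootstraps $L^2_1$ convergence to $L^2_k$ convergence for every $k$, hence $C^\infty_{\mathrm{loc}}$. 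The limit pair $(A_\infty,B_\infty)$ then solves \eqref{yz} on $X\setminus\mathbf{x}_0$.

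The main obstacle, and the only genuinely nontrivial step, is removing the singularities at the points of $\mathbf{x}_0$ to produce a genuine solution on $X$. For this I would verify the hypothesis of Theorem~\ref{rm} on each small punctured geodesic ball $B(x,r)\setminus\{x\}$ around a bubble point: the $L^2$-bound on $F_{A_\infty}$ follows from lower semicontinuity under weak convergence together with the uniform curvature bound, the bound on $\int |d_{A_\infty}B_\infty|^2$ follows from Theorem~\ref{ulh} passed through Fatou's lemma, and $\int |B_\infty|^4 \leq \mathrm{vol}(X)\,C_{t,\tau,X}^4$ is immediate. Theorem~\ref{rm} then furnishes a smooth extension $(\tilde A,\tilde B)$ across each puncture on a bundle $\tilde P \to B(x,r)$; patching these local extensions with $(A_\infty,B_\infty)$ on $X\setminus\mathbf{x}_0$ via the given bundle isomorphisms $u$ yields a global smooth bundle on $X$ (which must have characteristic number $\kappa - l_0$, by the measure convergence and Chern-Weil) together with a smooth solution of \eqref{yz}. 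Thus $[A_\infty,B_\infty,\mathbf{x}_0] \in \mathcal{M}_{PVRVW,\kappa-l_0}(t,\tau)\times \mathrm{Sym}^{l_0}(X) \subset \mathcal{IM}_{PVRVW,\kappa}(t,\tau)$, and the two bullet conditions hold by construction, which completes the sequential compactness.
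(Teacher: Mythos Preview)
Your proposal is correct and follows essentially the same approach as the paper, which itself does not give a detailed proof but simply assembles the a priori bounds (Theorem~\ref{ulh}), the removable singularities result (Theorem~\ref{rm}), the uniform $L^2$ curvature bound via Chern--Weil, and the finiteness of the bubble count, and then declares that ``a similar argument as in \cite{FL}'' yields the result. You have faithfully expanded that deferred argument---Uhlenbeck weak compactness for the connections, local Coulomb gauge plus the elliptic estimate of Theorem~\ref{tj} to bootstrap convergence of the $B$-component, and Theorem~\ref{rm} to extend across the bubble points---so there is nothing to correct.
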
	
	
	\section{The product bundle case}

	In this section, we prove Theorem \ref{m3}. We will need the following theorem, which is a special case of \cite[Theorem 1.3]{Ulh}. See \cite[Theorem 2.3.8]{D2}\cite[Corollary 3.15]{FL} for analogous results.
	\begin{theorem}\label{pb}
		 Let $X=B^4$, $G=SU(2)$  or $SO(3)$, $P=X\times G$, $\tilde{A}\in \mathcal{A}_k (P)$. Then there exists constants $K_4,C_4>0$ such that if $\vert\vert F_{\tilde{A}}\vert\vert_{L^2}\leq K_4$, then $\tilde{A}$ is gauge equivalent to a connection $A$ such that $d^*A=0$ and $\vert\vert A\vert\vert_{L_k^2}\leq C_4\vert\vert F_{A}\vert\vert_{L^2}$.
	\end{theorem}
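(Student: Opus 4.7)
The plan is to follow Uhlenbeck's original continuity method, specialised to the trivial bundle $P=B^4\times G$, in which case connections on $P$ are simply $\mathfrak{g}$-valued $1$-forms on $B^4$. For a fixed $\tilde A\in\mathcal A_k(P)$, consider the one-parameter family of connections $t\tilde A$ for $t\in[0,1]$, and define
$$S:=\{\,t\in[0,1]\mid t\tilde A\text{ is gauge equivalent to some }A^{(t)}\in\mathcal A_k(P)\text{ with }d^*A^{(t)}=0\text{ and }\|A^{(t)}\|_{L_k^2}\leq C_4\|F_{A^{(t)}}\|_{L^2}\,\}.$$
The goal is to choose $K_4,C_4>0$ small enough that whenever $\|F_{\tilde A}\|_{L^2}\leq K_4$, one has $S=[0,1]$; evaluating at $t=1$ then yields the theorem. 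Since $0\in S$ trivially (take $A^{(0)}\equiv 0$), it suffices to show $S$ is both open and closed in $[0,1]$. The Coulomb condition is interpreted in the interior and supplemented with the Neumann-type boundary condition $\iota_\nu A|_{\partial B^4}=0$, so that the relevant Hodge-type Laplacians are Fredholm and, modulo their finite-dimensional kernels (which consist of constants in $\mathfrak{g}$ and are killed by requiring the gauge parameter to vanish at a chosen interior point), invertible on the appropriate Sobolev spaces.

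For openness, suppose $t_0\in S$ with Coulomb representative $A_0:=A^{(t_0)}$, and introduce
$$\Phi(\xi,t):=d^*\bigl(e^{-\xi}(t\tilde A)e^{\xi}+e^{-\xi}\,de^{\xi}\bigr),\qquad(\xi,t)\in L_{k+1}^2(B^4,\mathfrak g)\times[0,1].$$
The $\xi$-derivative at $(0,t_0)$ is $d^*d_{A_0}$; for $\|A_0\|_{L_k^2}\leq C_4K_4$ sufficiently small this is a bounded perturbation of $d^*d$ and remains an isomorphism onto its image. The implicit function theorem supplies a continuous family $\xi(t)$ for $t$ near $t_0$ with $\Phi(\xi(t),t)=0$; the a priori Coulomb estimate discussed below, applied to $A^{(t)}:=e^{-\xi(t)}(t\tilde A)e^{\xi(t)}+e^{-\xi(t)}de^{\xi(t)}$, shows $t\in S$, so $S$ is open.

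For closedness, let $t_n\in S$ with $t_n\to t_\infty$ and Coulomb representatives $A_n$. By the standing bound $\|A_n\|_{L_k^2}\leq C_4\|F_{t_n\tilde A}\|_{L^2}\leq C_4K_4$, some subsequence converges weakly in $L_k^2$ and strongly in $L_{k-1}^2$ to a limit $A_\infty$. The connecting gauge transformations $u_n$ defined by $A_n=u_n^{-1}(t_n\tilde A)u_n+u_n^{-1}du_n$ are then bounded in $L_{k+1}^2$ and, after a further subsequence, converge weakly to some $u_\infty\in L_{k+1}^2$ with $u_\infty\cdot(t_\infty\tilde A)=A_\infty$; the conditions $d^*A_\infty=0$ and $\|A_\infty\|_{L_k^2}\leq C_4\|F_{A_\infty}\|_{L^2}$ pass to the limit by continuity and weak lower semicontinuity. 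Hence $t_\infty\in S$.

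The crux of the argument, and the main obstacle, is the a priori Coulomb estimate $\|A\|_{L_k^2}\leq C_4\|F_A\|_{L^2}$ for connections satisfying $d^*A=0$ with $\|F_A\|_{L^2}\leq K_4$. Since $dA=F_A-\tfrac12[A\wedge A]$, the Gaffney inequality on $B^4$ with Neumann boundary condition gives
$$\|A\|_{L_1^2}\leq C\bigl(\|d^*A\|_{L^2}+\|dA\|_{L^2}+\|A\|_{L^2}\bigr)\leq C\|F_A\|_{L^2}+C\|A\|_{L^4}^2,$$
and the Sobolev embedding $L_1^2(B^4)\hookrightarrow L^4(B^4)$ turns this into a quadratic inequality $\|A\|_{L_1^2}\leq C\|F_A\|_{L^2}+C'\|A\|_{L_1^2}^2$. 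Choosing $K_4$ small enough that this forces $\|A\|_{L_1^2}\leq 2C\|F_A\|_{L^2}$ establishes the base case; a bootstrap using the Sobolev multiplication $L_j^2\times L_j^2\to L_{j-1}^2$ on $B^4$ (valid for $j\geq 2$) upgrades the estimate to all $L_k^2$ norms and produces the constant $C_4$. The delicate point is that the small-$L^4$ hypothesis needed to absorb the quadratic term must be maintained along the whole continuity path, which is precisely why openness and closedness must be proved in tandem with the estimate rather than independently.
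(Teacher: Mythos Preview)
The paper does not prove this statement at all: it is quoted verbatim as a special case of Uhlenbeck's gauge-fixing theorem \cite[Theorem~1.3]{Ulh}, with pointers to the analogous results \cite[Theorem~2.3.8]{D2} and \cite[Corollary~3.15]{FL}, and the paper then simply uses it as a black box. Your sketch reproduces Uhlenbeck's continuity method, and the three ingredients you isolate---the implicit function theorem for openness, weak compactness of the gauge transformations for closedness, and the a~priori Coulomb estimate via the Gaffney inequality together with the quadratic inequality in the borderline Sobolev embedding $L_1^2(B^4)\hookrightarrow L^4(B^4)$---are exactly the right ones.

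There is, however, a genuine gap in the way you run the continuity argument. Along the \emph{linear} path $t\tilde A$ the curvature is
\[
F_{t\tilde A}=t\,F_{\tilde A}-\tfrac{t(1-t)}{2}\,[\tilde A\wedge\tilde A],
\]
and the second term is controlled only by $\|\tilde A\|_{L^4}^2$, not by $\|F_{\tilde A}\|_{L^2}$. Hence $\|F_{t\tilde A}\|_{L^2}$ need not stay below $K_4$ for intermediate $t$, so your openness step---which invokes ``$\|A_0\|_{L_k^2}\leq C_4K_4$ sufficiently small'' to make $d^*d_{A_0}$ a small perturbation of $d^*d$---is unjustified: from $t_0\in S$ you only get $\|A_0\|_{L_k^2}\leq C_4\|F_{t_0\tilde A}\|_{L^2}$, and the right-hand side is not bounded a~priori. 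The standard fix (and the one used in \cite{Ulh,D2}) is to replace the linear path by the \emph{dilation} path $A_t:=\phi_t^*\tilde A$, where $\phi_t(x)=tx$; in coordinates $A_t(x)=t\,\tilde A(tx)$. Then $F_{A_t}=\phi_t^*F_{\tilde A}$, and since the $L^2$ norm of a $2$-form in dimension $4$ is scale-invariant one obtains
\[
\|F_{A_t}\|_{L^2(B^4)}=\|F_{\tilde A}\|_{L^2(tB^4)}\leq\|F_{\tilde A}\|_{L^2(B^4)}\leq K_4
\]
for every $t\in[0,1]$, with $A_0=0$ and $A_1=\tilde A$. With this path the remainder of your argument goes through essentially as written.
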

	
	If $P=X\times G$, the product bundle, then $\kappa(P)=0$, and if $[A,B]\in\mathcal{M}^*_{PVRVW,0}(t,\tau)$, from the former section we have
	$$\vert \vert F_A\vert \vert_{L^2}\leq\sqrt{\frac{\mathrm{vol}(X)}{3}}C_{t,\tau,X}^2.$$
	Theorem \ref{ulh} implies that there is a $c_\tau>0$ such that for any generic $(t,\tau)\in\mathcal{T}^r-\mathcal{T}^r_{fc}$ with $t\geq c_\tau$, 
	$$\sqrt{\frac{\mathrm{vol}(X)}{3}}C_{t,\tau,X}^2\leq K_4.$$

	For such a pair of parameters $(t,\tau)$, let $\{[A_n,B_n]\}_{n\in\mathbb{N^*}}\subseteq\mathcal{M}^*_{PVRVW,0}(t,\tau)$ be a sequence of the solutions of \eqref{yz}. Theorem \ref{tk} implies that we can assume $A_n$ and $B_n$ are all $C^\infty$ for every $n\in\mathbb{N^*}$, and the choice of $(t,\tau)$ and Theorem \ref{pb} allow us to further assume $d^*A_n=0$ and $$\vert\vert A_n\vert\vert_{L_k^2}\leq C_4\vert\vert F_{A_n}\vert\vert_{L^2}\leq C_4K_4.$$
	Also, for some constant $c$,
	\begin{align*}
		\vert\vert d_{A_n}d_{A_n}B_n\vert\vert_{L^2}&=\vert\vert [F^+_{A_n},B_n]\vert\vert_{L^2}\\
		&\leq c\vert\vert F^+_{A_n}\vert\vert_{L^2}\vert\vert B_n\vert\vert_{L^\infty}\\
		&\leq c\sqrt{\frac{\mathrm{vol}(X)}{6}}C_{t,\tau,X}^2\cdot C_{t,\tau,X}\\
		&=c\sqrt{\frac{\mathrm{vol}(X)}{6}}C_{t,\tau,X}^3,\\
		\vert\vert d_{A_n}^*d_{A_n}d_{A_n}B_n\vert\vert_{L^2}&=\vert\vert d_{A_n}^*[F^+_{A_n},B_n]\vert\vert_{L^2}\\
		&=\frac{1}{8}\vert\vert d_{A_n}^*[[B_n\centerdot B_n],B_n]\vert\vert_{L^2}\\
		&\leq c\vert\vert d_{A_n}^*B_n\vert\vert_{L^2}\vert\vert B_n\vert\vert_{L^\infty}^2\\
		&\leq cK_{t,\tau,X}C_{t,\tau,X}^2.
	\end{align*}
	So inductively, the bounds $\vert\vert B\vert\vert_{L^\infty} \leq C_{t,\tau,X}$  and $\vert\vert d_A^*B\vert\vert_{L^2}=\vert\vert d_AB\vert\vert_{L^2}\leq K_{t,\tau,X}$ imply that there is a constant $K'_{t,\tau,X}>0$, depends only on $t,\tau$ and $X$, such that
	$$\vert\vert B_n\vert\vert_{L_k^2}\leq K'_{t,\tau,X}.$$
	Note that for $k\geq 3$,  Rellich's embedding theorem asserts that the embedding $L_k^2\subset C^{k-2}$ is compact, hence $\{(A_n,B_n)\}_{n\in\mathbb{N^*}}$ contains a convergent subsequence $\{(A_{n_l},B_{n_l})\}_{l\in\mathbb{N^*}}$, which is $L_k^2$ convergent to a limit $(A',B')\in L_{k}^2(X,\mathfrak{su}(2)_P\otimes\Lambda^{1})\oplus L_{k}^2(X,\mathfrak{su}(2)_P\otimes\Lambda^{2,+})$, and it's easy to see that $(A',B')$ is also a solution to \eqref{yz}. 
	
	\section*{Appendix}
	\appendix
	\setcounter{theorem}{0}
	\renewcommand\thetheorem{A.\arabic{theorem}}
	
	In this appendix, we prove the Lemma used in section \ref{trans}.

	\begin{lemma}\label{xx} Suppose that $B\in\mathfrak{su}(2)\otimes\Lambda^{2,+}\mathbb{R}^4$ has rank 3. Then the linear map
		$$\begin{aligned}
			M_B:\  \mathfrak{su}(2)\otimes\Lambda^{2,+}\mathbb{R}^4&\longrightarrow\mathfrak{su}(2)\otimes\Lambda^{2,+}\mathbb{R}^4 ,\\
			\psi&\longmapsto[B\centerdot\psi]\\
		\end{aligned}$$
		is injective, hence it is an isomorphism.
	\end{lemma}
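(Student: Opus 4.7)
\medskip

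\noindent\textbf{Proof proposal for Lemma A.1.}
The plan is to diagonalize $B$ via the singular value decomposition, decompose $\mathfrak{su}(2)\otimes\Lambda^{2,+}\mathbb{R}^4$ into $M_B$-invariant subspaces, and reduce the injectivity claim to checking that a $3\times 3$ and three $2\times 2$ matrices are nonsingular whenever $\rank(B)=3$. Since the domain and codomain both have (real) dimension $9$, injectivity is equivalent to $M_B$ being an isomorphism, so it suffices to show $\ker M_B = 0$.

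First I would invoke the singular value decomposition used in \cite[\S 4.1.1]{Ma} (as recalled in the proof of Theorem~\ref{ulh}) to choose an orthonormal basis $\{\eta_1,\eta_2,\eta_3\}$ of $\mathfrak{su}(2)$ with $[\eta_1,\eta_2]=2\eta_3$ and cyclic, and a basis $\{\sigma^1,\sigma^2,\sigma^3\}$ of $\Lambda^{2,+}\mathbb{R}^4$ with $\sigma^i\centerdot\sigma^j=-2\sigma^k$ for $(i,j,k)$ a cyclic permutation of $(1,2,3)$ and $\sigma^i\centerdot\sigma^i=0$, in which $B=B_1\eta_1\sigma^1+B_2\eta_2\sigma^2+B_3\eta_3\sigma^3$. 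The hypothesis $\rank(B)=3$ translates to $B_1,B_2,B_3\neq 0$.

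Next I would write an arbitrary $\psi=\sum_{i,j}\psi_{ij}\eta_i\sigma^j$ and compute the nine images $M_B(\eta_i\sigma^j)$ using the multiplication tables for $[\,\cdot\,,\,\cdot\,]$ and $\centerdot$; the cyclic-with-sign structure forces $M_B$ to preserve the orthogonal decomposition
\[
\mathfrak{su}(2)\otimes\Lambda^{2,+}\mathbb{R}^4 \;=\; D \oplus P_{12}\oplus P_{13}\oplus P_{23},
\]
where $D=\mathrm{span}\{\eta_i\sigma^i\}$ and $P_{ij}=\mathrm{span}\{\eta_i\sigma^j,\eta_j\sigma^i\}$. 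On $D$, the operator $M_B$ is represented (up to the overall factor $4$) by the symmetric matrix with zero diagonal and off-diagonal entries $-B_k$ for $\{i,j,k\}=\{1,2,3\}$, whose determinant is proportional to $B_1B_2B_3$. On each plane $P_{ij}$ the operator acts as $4B_k$ times the coordinate swap, with determinant proportional to $-B_k^2$.

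Thus all four block determinants are nonzero precisely because $B_1B_2B_3\neq 0$, so $\ker M_B=0$ and $M_B$ is an isomorphism. The only non-routine point is verifying that $M_B$ really splits along the diagonal/off-diagonal decomposition, which I expect to be the main (though still elementary) obstacle; everything else is an explicit computation dictated by the structure constants already recorded in the proof of Theorem~\ref{ulh}.
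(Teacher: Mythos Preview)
Your proposal is correct and follows essentially the same route as the paper: both arguments put $B$ in the singular-value form $B=\sum_i B_i\eta_i\sigma^i$ and then compute $[B\centerdot\psi]$ in coordinates, reducing injectivity to the nonvanishing of a $3\times 3$ determinant equal to $2B_1B_2B_3$ on the diagonal block together with scalar equations $B_k\psi_{ij}=0$ on the off-diagonal entries. Your explicit invariant-subspace decomposition $D\oplus P_{12}\oplus P_{13}\oplus P_{23}$ is a tidy organizational device the paper does not name, but the underlying computation and the determinant are identical.
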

	
	\begin{proof} As in the proof of Lemma~\ref{xzz}, there exist an oriented orthonormal basis $\{e^1,e^2,e^3,e^4\}$ for $\mathbb{R}^4$, and $\{\eta_1,\eta_2,\eta_3\}$  for the  Lie algebra $\mathfrak{su}(2)$ such that $B=B_{11}\eta_1(e^{12}+e^{34})+B_{22}\eta_2(e^{13}+e^{42})+B_{33}\eta_3(e^{14}+e^{23})$, where $B_{11},B_{22},B_{33}\in\R$.  Recall that $[\eta_1,\eta_2] =2\eta_3$ and cyclic permutations. 
		
		Since $\rank(B)=3$, we have $B_{11}B_{22}B_{33}\neq 0$. Let $\psi\in\mathfrak{su}(2)\otimes\Lambda^{2,+}\mathbb{R}^4$, and assume
		$$\begin{aligned}
			\psi&=(\psi_{11}\eta_1+\psi_{12}\eta_2+\psi_{13}\eta_3)(e^{12}+e^{34})\\
			&+(\psi_{21}\eta_1+\psi_{22}\eta_2+\psi_{23}\eta_3)(e^{13}+e^{42})\\
			&+(\psi_{31}\eta_1+\psi_{32}\eta_2+\psi_{33}\eta_3)(e^{14}+e^{23}),
		\end{aligned}$$
		where $\psi_{ij}\in\R$, $i,j=1,2,3$.
		
		If $[B\centerdot\psi]=0$, then we have
		\begin{align*}
			0=[B\centerdot\psi]&=-2\big([B_{11}\eta_1,\psi_{21}\eta_1+\psi_{22}\eta_2+\psi_{23}\eta_3]\\
			&-[B_{22}\eta_2,\psi_{11}\eta_1+\psi_{12}\eta_2+\psi_{13}\eta_3]\big)(e^{14}+e^{23})\\
			&-2\big([B_{22}\eta_2,\psi_{31}\eta_1+\psi_{32}\eta_2+\psi_{33}\eta_3]\\
			&-[B_{33}\eta_3,\psi_{21}\eta_1+\psi_{22}\eta_2+\psi_{23}\eta_3]\big)(e^{12}+e^{34})\\
			&-2\big([B_{33}\eta_3,\psi_{11}\eta_1+\psi_{12}\eta_2+\psi_{13}\eta_3]\\
			&-[B_{11}\eta_1,\psi_{31}\eta_1+\psi_{32}\eta_2+\psi_{33}\eta_3]\big)(e^{13}+e^{42})\\
			&=-4(B_{11}\psi_{22}\eta_3-B_{11}\psi_{23}\eta_2+B_{22}\psi_{11}\eta_3-B_{22}\psi_{13}\eta_1)(e^{14}+e^{23})\\
			&-4(-B_{22}\psi_{31}\eta_3+B_{22}\psi_{33}\eta_1-B_{33}\psi_{21}\eta_2+B_{33}\psi_{22}\eta_1)(e^{12}+e^{34})\\
			&-4(B_{33}\psi_{11}\eta_2-B_{33}\psi_{12}\eta_1-B_{11}\psi_{32}\eta_3+B_{11}\psi_{33}\eta_2)(e^{12}+e^{34}).\
		\end{align*}
		So,
		$$\left\{
		\begin{aligned}
			&B_{11}\psi_{22}+B_{22}\psi_{11}=B_{11}\psi_{23}=B_{22}\psi_{13}=0,\\
			&B_{22}\psi_{33}+B_{33}\psi_{22}=B_{22}\psi_{31}=B_{33}\psi_{21}=0,\\
			&B_{33}\psi_{11}+B_{11}\psi_{33}=B_{33}\psi_{12}=B_{11}\psi_{32}=0.\\
		\end{aligned}\right.$$
		Note that, since $B_{11}B_{22}B_{33}\neq 0$, we have $\psi_{ij}=0$ for $i\neq j$. For $\psi_{11},\psi_{22},\psi_{33}$, the determinant
		$$\begin{vmatrix}
			B_{22} & B_{11} & 0 \\
			0 & B_{33} & B_{22} \\
			B_{33} & 0 & B_{11} \\
		\end{vmatrix}=2B_{11}B_{22}B_{33}\neq 0,$$
		so, $\psi_{11}=\psi_{22}=\psi_{33}=0$, and we have $\psi=0$. Hence $M_B$ is injective, and also, it is an isomorphism.
	\end{proof}


\begin{thebibliography}{99}
		
		\bibitem{Ag}
		Agmon,S., Nirenberg, L.: Lower bounds and uniqueness theorems for solutions of differential equations in Hilbert spaces. Commun. Pure. Appl. Math. 20(1), 207–229 (1967)
		\bibitem{AHS2}
		Atiyah, M. F., Hitchin, N. J., Singer, I. M.: Self-duality in four-dimensional riemannian geometry. Proceedings of The Royal Society A. 362(1711), 425-461 (1978)
		\bibitem{Ar}
		Aronszajn, N.: A unique continuation theorem for solutions of elliptic partial differential equations or
		inequalities of second order. J. Math. Pures. Appl. 36(9), 235–249 (1957)
		\bibitem{D}
		Donaldson, S. K.: An application of gauge theory to four-dimensional topology. J. Diff. Geom. 18(2), 279-315 (1983)
		\bibitem{D1}
		Donaldson, S.K.: Polynomial invariants for smooth four-manifolds. Topology. 29(3), 257–315 (1990)
		\bibitem{D2}
		Donaldson, S.K., Kronheimer, P.B.: The Geometry of Four-Manifolds. Oxford University Press, New York (1990)
		\bibitem{DG}
		Bo D. and Ren G.: Transversality for the Full Rank Part of Vafa–Witten Moduli Spaces. Commun. Math. Phys. 389(2), 1047-1060 (2021)
		\bibitem{Fe}
		Feehan, P. M. N.: Generic metrics, irreducible rank-one PU(2) monopoles, and transversality. Comm. Anal. Geom. 8, 905–967 (2000)
		\bibitem{FL}
		Feehan, P. M. N., Leness, T. G.: PU(2) Monopoles I: Regularity, Uhlenbeck Compactness, and Transversality. J. Diff. Geom.  49(2), 265-410 (1997)
		\bibitem{FU}
		Freed, D.,  Uhlenbeck, K. K.: Instantons and Four-Manifolds, 2nd edn. Springer, New York (1991)
		\bibitem{JTU}
		Joyce, D., Tanaka, Y., Upmeier, M.: On orientations for gauge-theoretic moduli spaces. Adv. Math. 362,
		106957 (2020)
		\bibitem{Kaz}
		Kazdan, J.L.: Unique continuation in geometry. Commun. Pure. Appl. Math. 41(5), 667–681 (1988)
		\bibitem{Lm}
		Lawson, H. B., Michelsohn, M. L.: Spin geometry. Princeton University Press, Princeton (1989)
		\bibitem{Ma}
		Mares, B.A.: Some analytic aspects of reduced Vafa–Witten twisted N = 4 supersymmetric Yang–Mills
		theory. PhD Thesis, Massachusetts Institute of Technology (2010)
		\bibitem{Mor}
		Morgan, J.W.: The Seiberg–Witten Equations and Applications to the Topology of Smooth FourManifolds. Princeton University Press, Princeton (1996)
		\bibitem{MS}
		McDuff, D., Salamon, D.: J-holomorphic Curves and Symplectic Topology, 2nd edn. American Mathematical Society, Providence (2012)
		\bibitem{SW1}
		N. Seiberg, E. Witten. Monopoles, Duality and Chiral Symmetry Breaking in N=2 Supersymmetric QCD[J]. Nuclear Physics B, 1994, 431(3):484-550.
		\bibitem{Tan}
		Tanaka, Y.: A perturbation and generic smoothness of the Vafa–Witten moduli spaces on closed symplectic four-manifolds. Glasg. Math. J. 61(2), 471–486 (2019)
		\bibitem{Ulh}
		Uhlenbeck, K. K.: Connections with $L^p$ bounds on curvature. Commun. Math. Phys.  83(1), 31-42 (1982)
		\bibitem{VW}
		Vafa, C., Witten, E.: A strong coupling test of S-duality. Nucl. Phys. B 431(1–2), 3–77 (1994). arXiv:hep-th/9408074
		\bibitem{W}
		Witten, E.: Monopoles and four-manifolds. Math. Res. Lett. 1(6), 769–796 (1994)
		
		
		
	\end{thebibliography}
\end{document}